\definecolor{rose}{rgb}{1.5,0.0,0.60}
\newtheorem{thm}{Theorem}[section]
\newtheorem{pro}[thm]{Proposition}
\newtheorem{ex}[thm]{Example}
\theoremstyle{definition}
\newtheorem{rmk}[thm]{Remark}
\newtheorem{defi}[thm]{Definition}
\newcommand{\nc}{\newcommand}
\newcommand{\delete}[1]{}
\nc{\mlabel}[1]{\label{#1}}  % Use this to suppress names
\nc{\mcite}[1]{\cite{#1}}  % Use this to suppress names
\nc{\mref}[1]{\ref{#1}}  % Use this to suppress names
\nc{\mbibitem}[1]{\bibitem{#1}} % Use this to show number
\nc{\mlabel}[1]{\label{#1}{\hfill \hspace{1cm}{\bf{{\ }\hfill(#1)}}}}
\nc{\mcite}[1]{\cite{#1}{{\em{{\ }(#1)}}}}  % Use this lines to show names
\nc{\mref}[1]{\ref{#1}{{\em{{\ }(#1)}}}}  % Use this lines to show names
\nc{\mbibitem}[1]{\bibitem[\em #1]{#1}} % Use this to show name
\newcommand {\emptycomment}[1]{}
\nc{\oprn}{\theta}
\nc{\Oprn}{\Theta}
\nc{\calo}{\mathcal{O}}
\nc{\oop}{$\mathcal{O}$-operator\xspace}
\nc{\oops}{$\mathcal{O}$-operators\xspace}
\nc{\mrho}{{\bm{\varrho}}}
\nc{\emk}{\mathbf{K}}
\nc{\invlim}{\displaystyle{\lim_{\longleftarrow}}\,}
\nc{\ot}{\otimes}
\newcommand{\be }{\begin{equation}}
\newcommand{\ee }{\end{equation}}
\newcommand{\br}[1]{   [ \cdot,    \cdot  ]   }
\nc{\CV}{\mathbf{C}}
\begin{document}

	\title[A cohomological study of modified Rota-Baxter associative algebras with derivations]
{A cohomological study of modified Rota-Baxter associative algebras with derivations}

\author{Imed Basdouri, Sami Benabdelhafidh, Mohamed Amin Sadraoui, Ripan Saha }
\address{University of Gafsa, Faculty of Sciences Gafsa, 2112 Gafsa, Tunisia.}
\email{\bf basdourimed@yahoo.fr}

\address{University of Sfax, Faculty of Sciences of Sfax, BP 1171, 3038 Sfax, Tunisia.}
\email{\bf abdelhafidhsami41@gmail.com}

\address{University of Sfax, Faculty of Sciences of Sfax, BP 1171, 3038 Sfax, Tunisia.}
\email{\bf aminsadrawi@gmail.com}

\address{Department of Mathematics, Raiganj University
	Raiganj, 733134, West Bengal, India.}
\email{\bf ripanjumaths@gmail.com}
%\date{\today}

\begin{abstract}
This paper presents a cohomological study of modified Rota-Baxter associative algebras in the presence of derivations. The Modified Rota-Baxter operator, which is a modified version and closely related to the classical Rota-Baxter operator, has garnered significant attention due to its applications in various mathematical and physical contexts. In this study, we define a cohomology theory and also investigate a one-parameter formal deformation theory and abelian extensions of modified Rota-Baxter associative algebras under the influence of derivations.
\end{abstract}

%\subjclass[2010]{17B37,81R50,17B56,81R12,16T26,17A30,17B62}

\keywords{Associative algebras, derivation, modified Rota-Baxter operator, cohomology, deformation, extension.}
\subjclass[2020]{16B50, 17B38, 16E40, 16S80, 16S70}
\maketitle

\vspace{-1.1cm}

\tableofcontents

\allowdisplaybreaks

%%%%%%%%%%%%%%%%%%%%%%%%%%%%%%%%%%%%%
\section{Introduction}
The inception of the Rota-Baxter operator traces back to its emergence within the realm of fluctuation theory in Probability \cite{Baxter}. Subsequent advancements were spearheaded by Rota \cite{Rota} and Cartier \cite{Cartier}. Since then, the Rota-Baxter operator has garnered attention across diverse algebraic structures, including associative algebras \cite{das20}, Lie algebras \cite{guo12, GLS,D21}, Pre-Lie algebras \cite{LHB}, and Leibniz algebras \cite{MS22}. 

A linear map $\mathrm{P: A \rightarrow A}$ is said to be a Rota-Baxter operator of weight $\lambda \in \mathbb{K}$ if the map $\mathrm{P}$ satisfies $$\mathrm{\mu(P(a),P(b)) = P(\mu(P(a),b) + \mu(a,P(b))) + \lambda P\mu(a,b)},$$ for all $\mathrm{a,b \in A}$.

In Semenov-Tyan-Shanski's \cite{sem} work, it was noted that given specific circumstances, a Rota-Baxter operator with weight 0 on a Lie algebra corresponds simply to the operator manifestation of the classical Yang-Baxter equation (CYBE). Furthermore, within the same study, the author introduced a closely associated concept known as the modified classical Yang-Baxter equation (modified CYBE), whose solutions are termed modified $r$-matrices. The modified CYBE has been applied significantly in investigating Lax equations, affine geometry on Lie groups, factorization problems in Lie algebras, and compatible Poisson structures. While the modified CYBE can be derived from the CYBE through an appropriate transformation, they serve distinct roles in mathematical physics. Consequently, various researchers have examined them separately. Motivated by such inquiries, the authors in subsequent works considered the associative counterpart of the modified CYBE, termed the modified associative Yang-Baxter equation with weight $\kappa \in \mathbb{K}$ (abbreviated as modified AYBE with weight $\kappa$). A solution to the modified AYBE with weight $\kappa$ is termed a modified Rota-Baxter operator with weight $\kappa$, representing the associative counterpart of modified $r$-matrices.  the modified Rota-Baxter operator on associative algebras receives attention in \cite{Das2}. Notably, Mondal and Saha delve into the cohomology of modified Rota-Baxter Leibniz algebra of weight $\kappa$ \cite{MS24}.

For a given associative algebra $\mathrm{A}$, a linear mapping $\mathrm{R : A \rightarrow A}$ is called a modified Rota-Baxter operator with weight $\kappa \in \mathbb{K}$ if it satisfies the equation $$\mathrm{\mu(R(a),R(b)) = R(\mu(R(a),b) + \mu(a,R(b))) + \kappa \mu(a,b),}$$ for all $\mathrm{a,b \in A}$. An associative algebra equipped with a modified Rota-Baxter operator of weight $\kappa$ is referred to as a modified Rota-Baxter algebra with weight $\kappa$. It has been noted that if $(A,P)$ constitutes a Rota-Baxter algebra with weight $\lambda$, then $\mathrm{(A,R = \lambda \mathrm{id} + 2P)}$ forms a modified Rota-Baxter algebra with weight $\kappa = -\lambda^2$.

The exploration of derivations on algebraic structures finds its roots in the work of Ritt \cite{ritt}, who laid the groundwork for commutative algebras and fields. The notion of differential (commutative) algebra \cite{CGKS} emerges from this exploration. Derivations across various types of algebras unveil critical insights into their structural nuances. For instance, Coll, Gerstenhaber, and Giaquinto \cite{C89} explicitly delineate a deformation formula for algebras, where the Lie algebra of derivations encompasses the unique non-abelian Lie algebra of dimension two. Amitsur \cite{AS57, AS82} delves into the study of derivations of central simple algebras. Derivations find utility in constructing homotopy Lie algebras \cite{V05} and play a pivotal role in differential Galois theory \cite{M94}. A recent exploratory angle is presented in \cite{loday10}, where algebras with derivations are scrutinized from an operadic perspective. Moreover, Lie algebras with derivations, referred to as LieDer pairs, undergo examination from a cohomological standpoint in \cite{T19, QL}, with extensions and deformations of LieDer pairs being subjects of further consideration. Also derivations are useful in the construction of InvDer algebraic structures in \cite{Bas}

Parallel to the Analytical Deformation Theory of the 1960s, Murray Gerstenhaber \cite{G63, G64} ventured into the formal deformation theory of associative algebras. This endeavor necessitates an apt cohomology, termed deformation cohomology, to oversee the deformations under scrutiny. Gerstenhaber's seminal work showcases how Hochschild cohomology governs the one-parameter formal deformation of associative algebras. In a similar vein, Nijenhuis and Richardson delve into the formal deformation theory for Lie algebras \cite{NR66}. 

This paper undertakes an exploration of modified Rota-Baxter associative algebras with derivations. A modified Rota-Baxter AssDer pair of weight $\kappa$ comprises a modified Rota-Baxter associative algebra of weight $\kappa$, equipped with a derivation satisfying the compatibility condition. Initially, we introduce the concept of modified Rota-Baxter associative algebras with derivations. We investigate the cohomologies of modified Rota-Baxter AssDer pairs in an arbitrary bimodule. Additionally, we examine formal one-parameter deformations of modified Rota-Baxter AssDer pairs. In particular, we demonstrate that the infinitesimal of a formal one-parameter deformation of a modified Rota-Baxter AssDer pair forms a $\mathrm{2}$-cocycle in the cohomology complex with coefficients in itself. Furthermore, the vanishing of the second cohomology implies the rigidity of the modified Rota-Baxter AssDer pair. As an additional application of our cohomology, we explore abelian extensions of modified Rota-Baxter AssDer pairs and establish their classification by the second cohomology groups.

\section{Modified Rota-Baxter AssDer pairs}
\def\theequation{\arabic{section}.\arabic{equation}}
\setcounter{equation} {0}
In this section, we consider the concept of modified Rota-Baxter AssDer pairs and their bimodules. We begin with some basic definitions related to modified Rota-Baxter associative algebras.\\
A modified Rota-Baxter associative algebra of weight $\kappa$ is an associative algebra $(\mathcal{A}=(\mathrm{A,\mu}))$ equipped with a linear map $\mathrm{R:A\rightarrow A}$ such that
\begin{equation*}
	\mathrm{\mu(R(a),R(b))=R\big(\mu(R(a),b)+\mu(a,R(b))\big)+\kappa \mu(a,b)},\quad \forall\mathrm{ a,b\in A}.
\end{equation*}
Denote simply, in this paper, by $(\mathcal{A},\mathrm{R})$.\\
Recall that if $\mathrm{M}$ is a vector space and $\mathrm{\mathbf{l}:A\otimes M\rightarrow M}$, $\mathrm{\mathbf{r}:M\otimes A\rightarrow A}$ be two linear maps satisfying
\begin{equation*}
	\mathrm{\mathbf{l}(\mu(a,b),m)=\mathbf{l}(a,\mathbf{l}(b,m))},\quad \mathrm{\mathbf{l}(a,\mathbf{r}(m,b))=\mathbf{r}(\mathbf{l}(a,m),b)},\quad \mathrm{\mathbf{r}(m,\mu(a,b))=\mathbf{r}(\mathbf{r}(m,a),b)},
\end{equation*}
for all $\mathrm{a,b\in A} \text{ and } \mathrm{m\in M}$ 
then $\mathrm{\mathcal{M}=(M,\mathbf{l,r})}$ is an $\mathrm{A}$-bimodule.\\
A bimodule over the modified Rota-Baxter associative algebra $(\mathcal{A},\mathrm{R})$ of weight $\kappa$  is a couple $(\mathcal{M},\mathrm{R_M})$ in which $\mathcal{M}$ is an $\mathcal{A}$-bimodule and $\mathrm{R_M}:\mathrm{M\rightarrow M}$ is a linear map such that for $\mathrm{a\in A,m\in M}$,
\begin{eqnarray}
	\mathrm{\mathbf{l}(R(a),R_M(m))}&=&\mathrm{R_M(\mathbf{l}(R(a),m)+\mathbf{l}(a,R_M(m)))+\kappa \mu(a,m)},\label{rep mRBA1}\\
	\mathrm{\mathbf{r}(R_M(u),R(a))}&=&\mathrm{R_M(\mathbf{r}(R_M(m),a)+\mathbf{r}(m,R(a)))+\kappa r(m,a)}.\label{rep mRBA2}
\end{eqnarray}
If $(\mathcal{A},\mathrm{R})$ is a modified Rota-Baxter associative algebra of weight $\kappa$, then the couple $(\mathcal{A},\mathrm{R})$ is a bimodule over it, where the $\mathcal{A}$-bimodule structure on $\mathcal{A}$ is given by the algebra multiplication. This is called the adjoint bimodule.\\
Next we introduce the notion of \textbf{modified Rota-Baxter AssDer pairs of weight $\kappa$}.
\begin{defi}
	A modified Rota-Baxter AssDer pair of weight $\kappa$ consists of a modified Rota-Baxter associative algebra of weight $\kappa$ equipped with a derivation $\mathrm{d}$ on $\mathcal{A}$ such that
	\begin{equation}\label{equation RBAD1}
		\mathrm{R\circ d=d\circ R}.
	\end{equation}
	Denote it by $(\mathrm{A},\mu,\mathrm{R,d})$ or simply $(\mathcal{A},\mathrm{R,d})$.
\end{defi}
In the sequel, we use the notation modified Rota-Baxter AssDer pair instead of modified Rota-Baxter AssDer pair of weight $\kappa$ if there is no confusion.
\begin{pro}
	Let $(\mathrm{\mathcal{A}=(A,\mu_A),R_A,d_A})$ and $(\mathrm{\mathcal{B}=(B,\mu_B),R_B,d_B})$ be two modified Rota-Baxter AssDer pairs. Then $(\mathrm{A\oplus B,\mu_{A\oplus B},R_{A\oplus B},d_{A\oplus B}})$ is a modified Rota-Baxter AssDer pair where
	\begin{equation*}
		\mathrm{\mu_{A\oplus B}(a+x,b+y)=\mu(a,b)_A+\mu(x,y)_B},
	\end{equation*}
	\begin{equation*}
		\mathrm{R_{A\oplus B}(a+x)=R_A(a)+R_B(x)},
	\end{equation*}
	and
	\begin{equation*}
		\mathrm{d_{A\oplus B}(a+x)=d_A(a)+d_B(x),\quad \forall a,b\in A \text{ and } x,y\in B}.
	\end{equation*}
\end{pro}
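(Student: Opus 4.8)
The plan is to verify the four defining conditions of a modified Rota-Baxter AssDer pair for the triple $(\mathrm{R_{A\oplus B}},\mathrm{d_{A\oplus B}})$ on $(\mathrm{A\oplus B},\mathrm{\mu_{A\oplus B}})$, exploiting the fact that every structure map in sight is block-diagonal with respect to the decomposition $\mathrm{A\oplus B}$. Consequently, each identity to be checked splits into the corresponding identity on $\mathrm{A}$ and on $\mathrm{B}$, both of which hold by hypothesis. Throughout I write a generic element as $a+x$ with $\mathrm{a\in A}$ and $\mathrm{x\in B}$.

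First I would record that $(\mathrm{A\oplus B},\mathrm{\mu_{A\oplus B}})$ is an associative algebra: since $\mathrm{\mu_{A\oplus B}(a+x,b+y)=\mu_A(a,b)+\mu_B(x,y)}$ has no cross terms, associativity of $\mathrm{\mu_{A\oplus B}}$ reduces termwise to associativity of $\mathrm{\mu_A}$ and of $\mathrm{\mu_B}$. Next I would verify the modified Rota-Baxter identity of weight $\kappa$ for $\mathrm{R_{A\oplus B}}$. Using $\mathrm{R_{A\oplus B}(a+x)=R_A(a)+R_B(x)}$ and expanding both $\mathrm{\mu_{A\oplus B}(R_{A\oplus B}(a+x),R_{A\oplus B}(b+y))}$ and $\mathrm{R_{A\oplus B}\big(\mu_{A\oplus B}(R_{A\oplus B}(a+x),b+y)+\mu_{A\oplus B}(a+x,R_{A\oplus B}(b+y))\big)+\kappa\,\mu_{A\oplus B}(a+x,b+y)}$, no mixed $\mathrm{A}$--$\mathrm{B}$ contributions appear, and the $\mathrm{A}$-component of the resulting equation is precisely the modified Rota-Baxter identity for $\mathrm{R_A}$ while the $\mathrm{B}$-component is that for $\mathrm{R_B}$.

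Then I would check that $\mathrm{d_{A\oplus B}}$ is a derivation of $\mathrm{\mu_{A\oplus B}}$: from $\mathrm{d_{A\oplus B}(a+x)=d_A(a)+d_B(x)}$ together with the block form of $\mathrm{\mu_{A\oplus B}}$, the Leibniz rule $\mathrm{d_{A\oplus B}(\mu_{A\oplus B}(a+x,b+y))=\mu_{A\oplus B}(d_{A\oplus B}(a+x),b+y)+\mu_{A\oplus B}(a+x,d_{A\oplus B}(b+y))}$ again separates into the Leibniz rule for $\mathrm{d_A}$ on $\mathrm{\mu_A}$ and for $\mathrm{d_B}$ on $\mathrm{\mu_B}$. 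Finally, the compatibility $\mathrm{R_{A\oplus B}\circ d_{A\oplus B}=d_{A\oplus B}\circ R_{A\oplus B}}$ is obtained by evaluating both sides on $a+x$, yielding $\mathrm{R_A(d_A(a))+R_B(d_B(x))}$ versus $\mathrm{d_A(R_A(a))+d_B(R_B(x))}$, which agree by \eqref{equation RBAD1} applied to each of the two given pairs.

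There is no genuine obstacle: the only point requiring a little care is the bookkeeping that confirms the absence of cross terms after each expansion, which is immediate from the definitions of $\mathrm{\mu_{A\oplus B}}$, $\mathrm{R_{A\oplus B}}$ and $\mathrm{d_{A\oplus B}}$. Conceptually one may summarize the argument by observing that $\mathrm{A\oplus B}$ equipped with these maps is the product of $(\mathcal{A},\mathrm{R_A},\mathrm{d_A})$ and $(\mathcal{B},\mathrm{R_B},\mathrm{d_B})$ in the category of modified Rota-Baxter AssDer pairs, with the evident projections as morphisms.
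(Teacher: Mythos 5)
Your proposal is correct and follows essentially the same route as the paper: verify associativity, the modified Rota-Baxter identity, the Leibniz rule, and the commutation $\mathrm{R_{A\oplus B}\circ d_{A\oplus B}=d_{A\oplus B}\circ R_{A\oplus B}}$ componentwise, using that all structure maps are block-diagonal so no cross terms arise. The paper simply writes out these same expansions explicitly.
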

\begin{proof}
	Let $\mathrm{a,b,c\in A}$ and $\mathrm{x,y,z\in B}$,
	\begin{align*}
		\mathrm{\mu_{A\oplus B}(\mu_{A\oplus B}(a+x,b+y),c+z)}&=\mathrm{\mu_{A\oplus B}(\mu_A(a,b)+\mu_B(x,y),c+z)}\\
		=&\mathrm{\mu_A(\mu_A(a,b),c)+\mu_B(\mu_B(x,y),z)}\\
		=&\mathrm{\mu_A(a,\mu_A(b,c))+\mu_B(x,\mu_B(y,z))}\\
		=&\mathrm{\mu_{A\oplus B}(a+x,\mu_A(b,c)\mu_B(y,z))}\\
		=&\mathrm{\mu_{A\oplus B}(a+x,\mu_{A\oplus B}(b+y,c+z)).}
	\end{align*}
	This means that $\mathrm{(A\oplus B,\mu_{A\oplus B})}$ is an associative algebra.\\
	\begin{align*}
		\mathrm{\mu_{A\oplus B}(R_{A\oplus B}(a+x),R_{A\oplus B}(b+y))}&=\mathrm{\mu_{A\oplus B}(R_A(a)+R_B(x),R_A(b)+R_B(y))}\\
		=&\mathrm{\mu_A(R_A(a),R_A(b))+\mu_B(R_B(x),R_B(y))}\\
		=&\mathrm{R_A(\mu_A(R_A(a),b))+R_B(\mu_B(R_B(x),y))+R_A(\mu_A(a,R_A(b)))+R_B(\mu_B(x,R_B(y)))}\\
		&+\mathrm{\kappa(\mu_A(a,b)+\mu_B(x,y))}\\
		=&\mathrm{R_{A\oplus B}(\mu_A(R_A(a),b)+\mu_B(R_B(x),y))+R_{A\oplus B}(\mu_A(a,R_A(b))+\mu_B(x,R_B(y)))}\\
		&+\mathrm{+\kappa(\mu_A(a,b)+\mu_B(x,y))}\\
		=&\mathrm{R_{A\oplus B}\Big(\mu_{A\oplus B}(R_{A\oplus B}(a+x),b+y)+\mu_{A\oplus B}( a+x,R_{A\oplus B}(b+y))\Big)}\\
		&+\mathrm{\kappa\mu_{A\oplus B}(a+x,b+y)}
	\end{align*}
	This means that $(A\oplus B,\mu_{A\oplus B},R_{A\oplus B})$ is a modified Rota-Baxter associative algebra.\\
	\begin{align*}
		\mathrm{d_{A\oplus B}(\mu_{A\oplus B}(a+x,b+y))}&=\mathrm{d_{A\oplus B}(\mu_A(a,b)+\mu_B(x,y))}\\
		&=\mathrm{d_A(\mu_A(a,b))+d_B(\mu_B(x,y))}\\
		&=\mathrm{\mu_A(d_A(a),b)+\mu_B(d_B(x),y)+\mu_A(a,d_A(b))+\mu_B(x,d_B(y))}\\
		&=\mathrm{\mu_{A\oplus B}(d_A(a)+d_B(x),b+y)+\mu_{A\oplus B}(a+x,d_A(b)+d_B(y))}\\
		&=\mathrm{\mu_{A\oplus B}(d_{A\oplus B}(a+x),b+y)+\mu_{A\oplus B}(a+x,d_{A\oplus B}(b+y))}.
	\end{align*}
	Which means that $\mathrm{d_{A\oplus B}}$ is a derivation on the associative algebra $\mathrm{(A\oplus B,\mu_{A\oplus B})}$.\\
	Finally
	\begin{align*}
		\mathrm{R_{A\oplus B}\circ d_{A\oplus B}(a+x)}&=\mathrm{R_{A\oplus B}(d_A(a)+d_B(x))}\\
		&=\mathrm{R_A(d_A(a))+R_B(d_B(x))}\\
		&=\mathrm{d_A(R_A(a))+d_B(R_B(x))}\\
		&=\mathrm{d_{A\oplus B}\circ R_{A\oplus B}(a+x)}.
	\end{align*}
This complete the proof.
\end{proof}
\begin{defi}
	Let $(\mathcal{A},\mathrm{R,d})$ be a modified Rota-Baxter AssDer pair. A \textbf{bimodule} of $(\mathcal{A},\mathrm{R,d})$ over a vector space $\mathcal{M}$ is a triple  $(\mathrm{\mathcal{M},R_M,d_M})$ where $\mathrm{(\mathcal{M},R_M)}$ is a bimodule of $(\mathcal{A},\mathrm{R})$ and $\mathrm{d_M:M\rightarrow M}$ is a linear map such that for all $\mathrm{a\in A,m\in M}$,
	\begin{eqnarray}
		\mathrm{d_M}\mathrm{(\mathbf{l}(a,m))}&=&\mathrm{\mathbf{l}(d(a),m)+\mathbf{l}(a,d_M(m))},\label{rep mRBAD1}\\
		\mathrm{d_M(\mathbf{r}(m,a))}&=&\mathrm{\mathbf{r}(d_M(m),a)+\mathbf{r}(m,d(a))},\label{rep mRBAD2}\\
		\mathrm{R_M\circ d_M}&=&\mathrm{d_M\circ R_M}.\label{rep mRBAD3}
	\end{eqnarray}
\end{defi}
\begin{ex}
	Let $(\mathcal{A},\mathrm{T,d})$ be a Rota-Baxter AssDer pair of weight $\kappa$ ( also denoted by $\kappa$-weighted Rota-Baxter AssDer pairs, see \cite{B1} for more details) and $\mathrm{(\mathcal{M},T_M,d_M)}$ be a bimodule over it.
	 Then $(\mathrm{\mathcal{M},R_M=\kappa Id_M+2T_M,d_M})$ is a bimodule over the modified Rota-Baxter AssDer pair \\ $(\mathrm{\mathcal{A},R=\kappa+Id_A+2T,d})$ of weight $-\kappa^2$.
\end{ex}
\begin{defi}
	Let $(\mathcal{A}_1=\mathrm{(A_1,\mu_1),R_1,d_1})$ and $(\mathcal{A}_2=\mathrm{(A_2,\mu_2),R_2,d_2})$ be two modified Rota-Baxter AssDer pairs. $\mathrm
	{f:(\mathcal{A}_1,R_1,d_1)\rightarrow (\mathcal{A}_2,R_2,d_2)}$ is said to be a \textbf{homomorphism} of modified Rota-Baxter AssDer pairs if $\mathrm{f:\mathcal{A}_1\rightarrow\mathcal{A}_2}$ is an associative algebra homomorphism such that
	\begin{eqnarray}
		\mathrm{f\circ R_1}&=&\mathrm{R_2\circ f},\\
		\mathrm{f\circ d_1}&=&\mathrm{d_2\circ f}.
	\end{eqnarray}
\end{defi}
Next, we define the \textbf{semi-direct product of modified Rota-Baxter AssDer pair} by a bimodule of it.
\begin{pro}
	Let $(\mathcal{A},\mathrm{R,d})$ be a modified Rota-Baxter AssDer pair and $\mathrm{(\mathcal{M},R_M,d_M)}$ be a bimodule of it. Then $\mathrm{(\mathcal{A}\oplus \mathcal{M},R_\ltimes,d_\ltimes)}$ is a modified Rota-Baxter AssDer pair, where the multiplication on $\mathrm{A\oplus M}$ is given by
	\begin{equation*}
		\mathrm{\mu_\ltimes(a+m,b+n)}=\mathrm{\mu(a,b)+\mathbf{l}(a,n)+\mathbf{r}(m,b)},
	\end{equation*}
	and the modified Rota-Baxter operator is given by
	\begin{equation*}
		\mathrm{R_\ltimes(a+m)=R(a)+R_M(m)},
	\end{equation*}
	and the derivation on $\mathrm{A\oplus M}$ is given by
	\begin{equation*}
		\mathrm{d_\ltimes(a+m)=d(a)+d_M(m)},\quad \forall \mathrm{a,b\in A} \text{ and } \mathrm{m,n\in M}.
	\end{equation*}
	We call such structure by the semi-direct product of the modified Rota-Baxter AssDer pair $\mathrm{(\mathcal{A},R,d)}$ by a bimodule of it $\mathrm{(\mathcal{M},R_M,d_M)}$ and denote it by $\mathcal{A}\ltimes_\mathrm{mRBAD}\mathcal{M}$.
\end{pro}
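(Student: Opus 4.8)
The plan is to check, one condition at a time, the four requirements that make $(\mathcal{A}\oplus\mathcal{M},\mu_\ltimes,R_\ltimes,d_\ltimes)$ a modified Rota-Baxter AssDer pair: associativity of $\mu_\ltimes$, the modified Rota-Baxter identity of weight $\kappa$ for $R_\ltimes$, the derivation property of $d_\ltimes$, and the commutation $R_\ltimes\circ d_\ltimes=d_\ltimes\circ R_\ltimes$. In each case I would expand both sides on general elements $a+m$ (with $a\in A$, $m\in M$) and compare the $A$-component and the $M$-component separately. The $A$-component will always reduce to the corresponding identity already known on $(\mathcal{A},\mathrm{R},\mathrm{d})$, so the real content lies in the $M$-component, which will be governed by the bimodule axioms.

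First, for associativity, expanding $\mu_\ltimes(\mu_\ltimes(a+m,b+n),c+p)$ and $\mu_\ltimes(a+m,\mu_\ltimes(b+n,c+p))$, the $M$-part of the difference breaks into three pieces: one involving only $\mathbf{l}$, one mixing $\mathbf{l}$ and $\mathbf{r}$, and one involving only $\mathbf{r}$. These vanish precisely by the three $\mathcal{A}$-bimodule axioms $\mathbf{l}(\mu(a,b),m)=\mathbf{l}(a,\mathbf{l}(b,m))$, $\mathbf{l}(a,\mathbf{r}(m,b))=\mathbf{r}(\mathbf{l}(a,m),b)$ and $\mathbf{r}(m,\mu(a,b))=\mathbf{r}(\mathbf{r}(m,a),b)$, so $(\mathcal{A}\oplus\mathcal{M},\mu_\ltimes)$ is associative.

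Second, and this is the step I expect to demand the most careful bookkeeping, I would verify the modified Rota-Baxter identity for $R_\ltimes$. The left side $\mu_\ltimes(R_\ltimes(a+m),R_\ltimes(b+n))$ has $M$-component $\mathbf{l}(R(a),R_M(n))+\mathbf{r}(R_M(m),R(b))$, while on the right side the $M$-component of $R_\ltimes\big(\mu_\ltimes(R_\ltimes(a+m),b+n)+\mu_\ltimes(a+m,R_\ltimes(b+n))\big)+\kappa\,\mu_\ltimes(a+m,b+n)$ splits into a contribution built only from $\mathbf{l}$ and the pair $(a,n)$ and a contribution built only from $\mathbf{r}$ and the pair $(m,b)$. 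These match the left side exactly by \eqref{rep mRBA1} and \eqref{rep mRBA2}, respectively, while the $A$-component is just the modified Rota-Baxter identity of weight $\kappa$ for $\mathrm{R}$ on $\mathcal{A}$. Hence $R_\ltimes$ is a modified Rota-Baxter operator of weight $\kappa$. The only thing to be careful about here is keeping the $\mathbf{l}$- and $\mathbf{r}$-terms rigorously separated, since $\mathbf{l}$ always carries the left algebra argument together with the right module argument and $\mathbf{r}$ the reverse.

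Third, applying $d_\ltimes$ to $\mu_\ltimes(a+m,b+n)$ settles the $A$-component because $\mathrm{d}$ is a derivation on $\mathcal{A}$, and the $M$-component $d_M(\mathbf{l}(a,n))+d_M(\mathbf{r}(m,b))$ is handled term-by-term by the compatibility conditions \eqref{rep mRBAD1} and \eqref{rep mRBAD2} of the bimodule $(\mathcal{M},R_M,d_M)$, so $d_\ltimes$ is a derivation on $(\mathcal{A}\oplus\mathcal{M},\mu_\ltimes)$. Finally, $R_\ltimes\circ d_\ltimes=d_\ltimes\circ R_\ltimes$ is immediate componentwise: on $A$ it is \eqref{equation RBAD1} and on $M$ it is \eqref{rep mRBAD3}. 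Combining the four verifications gives the assertion; no step requires anything beyond the defining relations of $(\mathcal{A},\mathrm{R},\mathrm{d})$ and of its bimodule, the only delicate point being the separation of the two module actions in the second step.
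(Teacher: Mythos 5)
Your proof is correct and proceeds by the same componentwise verification as the paper; the only difference is that the paper delegates the first two checks (associativity of $\mu_\ltimes$ and the modified Rota-Baxter identity for $R_\ltimes$) to a citation of \cite{D21} and only writes out the derivation property and the commutation $R_\ltimes\circ d_\ltimes=d_\ltimes\circ R_\ltimes$, exactly as you do via \eqref{rep mRBAD1}, \eqref{rep mRBAD2}, \eqref{equation RBAD1} and \eqref{rep mRBAD3}. Your explicit verification of the first two conditions, using the three bimodule axioms and equations \eqref{rep mRBA1}--\eqref{rep mRBA2}, is accurate and simply makes the argument self-contained.
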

\begin{proof}
	According to \cite{D21} we have $\mathrm{(\mathcal{A}\oplus \mathcal{M},R_\ltimes)}$ is a modified Rota-Baxter associative algebra. So we need, first, to prove that $\mathrm{d_\ltimes}$ is a derivation of $\mathcal{A}\oplus \mathcal{M}$. Let $a\in A$ and $m\in M$, and by using equations \eqref{rep mRBAD1} and \eqref{rep mRBAD2} we have
	\begin{align*}
		\mathrm{d_\ltimes( \mu_\ltimes(a+m,b+n) )}&=\mathrm{d_\ltimes(\mu(a,b)+\mathbf{l}(a,n)+\mathbf{r}(m,b))}\\
		=&\mathrm{d(\mu(a,b))+d_M(\mathbf{l}(a,n))+d_M(\mathbf{r}(m,b))}\\
		=&\mathrm{\mu(d(a),b) +\mathbf{l}(d(a),n)+\mathbf{r}(d_M(m),b)}\\
		&+\mathrm{\mu(a,d(b)) +\mathbf{l}(a,d_M(n))+\mathbf{r}(m,d(b))}\\
		=& \mathrm{\mu_\ltimes(d_\ltimes(a+m),b+n) + \mu_\ltimes(a+m,d_\ltimes(b+n))}.
	\end{align*}
	And using \eqref{equation RBAD1} and \eqref{rep mRBAD3} we have
	\begin{align*}
		\mathrm{R_\ltimes\circ d_\ltimes(a+m)}&=\mathrm{R_\ltimes(d(a)+d_M(m))}\\
		=&\mathrm{R\circ d(a)+R_M\circ d_M(m)}\\
		=&\mathrm{d\circ R(a)+d_M\circ R_M(m)}\\
		=&\mathrm{d_\ltimes\circ R_\ltimes(a+m)}.
	\end{align*}
	This complete the proof.
\end{proof}
Recall that if $\mathrm{(A,\mu)}$ be an associative algebra then a Lie algebra structure rises via the commutator. we generalize this construction to the case of modified Rota-Baxter LieDer pair structures.
\begin{pro}\label{symmetrization}
	Let $\mathrm{(A,\mu,R,d)}$ be a modified Rota-Baxter AssDer pair. Define $\mathrm{[-,-]_C:A\otimes A\rightarrow A}$ by
	\begin{equation*}
		\mathrm{[a,b]_C=\mu(a,b)-\mu(b,a)},\quad \forall a,b\in A.
	\end{equation*}
	Then $\mathrm{(A,[-,-]_C,R,d)}$ is a modified Rota-Baxter LieDer pair, see \cite{B1} for more details. We denote it by $\mathrm{(\mathcal{A}_C,R,d)}$.
\end{pro}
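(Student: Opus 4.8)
The plan is to verify the three defining conditions of a modified Rota-Baxter LieDer pair for the triple $(A,[-,-]_C,R,d)$, in each case deriving the Lie-bracket version of an identity from its associative counterpart by antisymmetrizing in the two arguments. First I would check that $[-,-]_C$ is a Lie bracket: skew-symmetry $[a,b]_C=-[b,a]_C$ is immediate from the definition, and the Jacobi identity is obtained by expanding $[[a,b]_C,c]_C+[[b,c]_C,a]_C+[[c,a]_C,b]_C$ into eight terms of the form $\pm\mu(\mu(-,-),-)$, rewriting each via the associativity of $\mu$, and observing that they cancel in pairs. This is the classical fact that the commutator of an associative algebra is a Lie algebra, so I would simply invoke it rather than grind through the bookkeeping.

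Next I would show that $R$ is a modified Rota-Baxter operator of weight $\kappa$ with respect to $[-,-]_C$. Writing the defining identity $\mu(R(a),R(b))=R\big(\mu(R(a),b)+\mu(a,R(b))\big)+\kappa\,\mu(a,b)$ at the pair $(a,b)$ and again at the pair $(b,a)$, and subtracting, the left-hand side becomes $\mu(R(a),R(b))-\mu(R(b),R(a))=[R(a),R(b)]_C$. On the right-hand side, linearity of $R$ groups the terms as $R\big(\mu(R(a),b)-\mu(b,R(a))\big)=R([R(a),b]_C)$ and $R\big(\mu(a,R(b))-\mu(R(b),a)\big)=R([a,R(b)]_C)$, while $\kappa\big(\mu(a,b)-\mu(b,a)\big)=\kappa[a,b]_C$. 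Hence $[R(a),R(b)]_C=R\big([R(a),b]_C+[a,R(b)]_C\big)+\kappa[a,b]_C$, which is precisely the modified Rota-Baxter identity for the bracket.

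Finally, $d$ being a derivation for $\mu$ means $d(\mu(a,b))=\mu(d(a),b)+\mu(a,d(b))$; subtracting the same relation with $a$ and $b$ interchanged yields $d([a,b]_C)=[d(a),b]_C+[a,d(b)]_C$, so $d$ is a derivation for $[-,-]_C$. The compatibility condition $R\circ d=d\circ R$ does not involve the multiplication at all, so \eqref{equation RBAD1} carries over verbatim. Combining the three steps gives that $(\mathcal{A}_C,R,d)$ is a modified Rota-Baxter LieDer pair. There is no real obstacle here; the argument is entirely routine antisymmetrization, and the only point requiring a little care is keeping the bracketings in the second step aligned with the definition of $[-,-]_C$ when one regroups the $R$-terms.
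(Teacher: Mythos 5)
Your proposal is correct and follows essentially the same route as the paper's proof: invoke the classical commutator construction for the Lie algebra structure, antisymmetrize the modified Rota--Baxter identity and regroup the $R$-terms as $R([R(a),b]_C)+R([a,R(b)]_C)$, and observe that the derivation property antisymmetrizes while $R\circ d=d\circ R$ is untouched. No issues.
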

\begin{proof}
	It is clear that $\mathrm{(A,[-,-]_C)}$ is a Lie algebra. Let $\mathrm{a,b\in A}$
	\begin{align*}
		\mathrm{[R(a),R(b)]_C}=&\mathrm{\mu(R(a),R(b))-\mu(R(b),R(a))}\\
		=&\mathrm{R(\mu(R(a),b)+\mu(a,R(b)))+\kappa\mu(a,b)}\\
		&\mathrm{-R(\mu(R(b),a)+\mu(b,R(a)))-\kappa\mu(b,a)}\\
		=&\mathrm{R(\mu(R(a),b)-\mu(b,R(a)))+R(\mu(a,R(b))-\mu(R(b),a))}\\
		&+\mathrm{\kappa(\mu(a,b)-\mu(b,a))}\\
		=&\mathrm{R([R(a),b]_C+[a,R(b)]_C)+\kappa [a,b]_C}
	\end{align*}
	This means that $\mathrm{(A,[-,-]_C,R)}$ is a modified Rota-Baxter Lie algebra. Also $\mathrm{d}$ is a derivation on $\mathrm{(A,[-,-]_C)}$ since $\mathrm{d}$ is a derivation on $(\mathrm{A,\mu})$, additionally the equation \eqref{equation RBAD1} holds. \\
	This complete the proof.
\end{proof}
In the following, we show that a modified Rota-Baxter AssDer pair induces a new AssDer pair structure, which called \textbf{the induced modified Rota-Baxter AssDer pair}.
\begin{pro}
	Let $\mathrm{(A,\mu,R,d)}$ be a modified Rota-Baxter AssDer pair. Then we have
	\begin{enumerate}
		\item[(1)] $(\mathrm{A,\mu_R,d})$ is a new AssDer pair with
		\begin{equation}\label{induced multiplication}
			\mathrm{\mu_R(a,b)=\mu(R(a),b)+\mu(a,R(b))},\quad \forall \mathrm{a,b\in A}.
		\end{equation}
		We denote it by $\mathrm{(\mathcal{A}_R,d)}$.
		\item [(2)] The triple $(\mathrm{\mathcal{A}_R,R,d})$ is a modified Rota-Baxter AssDer pair.
	\end{enumerate}
\end{pro}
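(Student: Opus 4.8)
The plan is to handle the two items in order, using throughout a convenient rewriting of the defining identity of a modified Rota-Baxter operator: combining that identity with \eqref{induced multiplication} gives
\begin{equation*}
	\mathrm{R(\mu_R(a,b)) = \mu(R(a),R(b)) - \kappa\,\mu(a,b)}, \qquad \forall\, \mathrm{a,b\in A}.
\end{equation*}
This is the main computational tool for both parts.

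For part (1), I would first verify that $\mathrm{\mu_R}$ is associative. Expanding $\mathrm{\mu_R(\mu_R(a,b),c)}$ by \eqref{induced multiplication}, substituting the rewriting above for $\mathrm{R(\mu_R(a,b))}$, and then applying associativity of $\mathrm{\mu}$, one is led to
\begin{align*}
	\mathrm{\mu_R(\mu_R(a,b),c)} =& \mathrm{\mu(R(a),\mu(R(b),c)) + \mu(R(a),\mu(b,R(c)))}\\
	&+\mathrm{\mu(a,\mu(R(b),R(c))) - \kappa\,\mu(a,\mu(b,c))}.
\end{align*}
The symmetric computation for $\mathrm{\mu_R(a,\mu_R(b,c))}$ produces exactly the same right-hand side, so $\mathrm{(A,\mu_R)}$ is associative. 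Next, expanding $\mathrm{d(\mu_R(a,b))}$, applying the Leibniz rule for $\mathrm{d}$ with respect to $\mathrm{\mu}$ to each summand, and using \eqref{equation RBAD1} in the form $\mathrm{d(R(a)) = R(d(a))}$, one regroups the terms into $\mathrm{\mu_R(d(a),b) + \mu_R(a,d(b))}$; hence $\mathrm{d}$ is a derivation of $\mathrm{\mu_R}$. Since \eqref{equation RBAD1} does not involve the multiplication, it is unchanged, and therefore $\mathrm{(\mathcal{A}_R,d)}$ is an AssDer pair.

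For part (2), by part (1) it only remains to check that $\mathrm{R}$ is a modified Rota-Baxter operator of weight $\kappa$ for $\mathrm{\mu_R}$, that is,
\begin{equation*}
	\mathrm{\mu_R(R(a),R(b)) = R\big(\mu_R(R(a),b) + \mu_R(a,R(b))\big) + \kappa\,\mu_R(a,b)}.
\end{equation*}
By \eqref{induced multiplication} the left-hand side equals $\mathrm{\mu(R^2(a),R(b)) + \mu(R(a),R^2(b))}$. For the right-hand side I would expand $\mathrm{\mu_R(R(a),b) + \mu_R(a,R(b))}$ and regroup the four resulting terms as $\mathrm{\big(\mu(R(R(a)),b)+\mu(R(a),R(b))\big) + \big(\mu(R(a),R(b)) + \mu(a,R(R(b)))\big)}$; applying the defining identity of $\mathrm{R}$ (equivalently, the rewriting above) to each bracketed pair, with arguments $\mathrm{(R(a),b)}$ and $\mathrm{(a,R(b))}$ respectively, gives $\mathrm{\mu(R^2(a),R(b)) + \mu(R(a),R^2(b)) - \kappa\big(\mu(R(a),b)+\mu(a,R(b))\big)}$, and the subtracted term is precisely $\mathrm{\kappa\,\mu_R(a,b)}$. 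Adding $\mathrm{\kappa\,\mu_R(a,b)}$ back recovers the left-hand side, so $\mathrm{(\mathcal{A}_R,R,d)}$ is a modified Rota-Baxter AssDer pair.

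All of this is elementary; the only point that needs care is in part (2), where the four terms must be grouped into pairs of the shape $\mathrm{\mu(R(x),y) + \mu(x,R(y))}$ so that the modified Rota-Baxter identity is applicable, together with the consistent use of $\mathrm{R\circ d = d\circ R}$. I do not anticipate any genuine obstacle beyond this bookkeeping.
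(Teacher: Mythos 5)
Your proposal is correct and takes essentially the same route as the paper: for part (2) both arguments apply the modified Rota--Baxter identity to the pairs $(\mathrm{R(a),b})$ and $(\mathrm{a,R(b)})$ after recognizing $\mathrm{\mu_R(R(a),R(b))=\mu(R^2(a),R(b))+\mu(R(a),R^2(b))}$, yours merely running the computation from the right-hand side instead of the left. The only difference is in part (1), where you verify the associativity of $\mathrm{\mu_R}$ by direct computation while the paper simply cites it as already known; your Leibniz-plus-$\mathrm{R\circ d=d\circ R}$ check that $\mathrm{d}$ is a derivation of $\mathrm{\mu_R}$ matches the paper's.
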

\begin{proof}
	For the first assertion, we already have $(\mathrm{A,\mu_R})$ is an associative algebra, so we need just to prove that $\mathrm{d}$ is a derivation on it. Using equation \eqref{equation RBAD1} and the fact that $\mathrm{d}$ is a derivation on $\mathrm{(A,\mu)}$ we obtain the result.\\
	For the second assertion, let $\mathrm{a,b\in A}$,
	\begin{align*}
		\mathrm{\mu_R(R(a),R(b))}=&\mathrm{\mu(R^2(a),R(b)+\mu(R(a),R^2(b)))}\\
		=&\mathrm{R\big(\mu(R^2(a),b)+\mu(R(a),R(b))\big)+\kappa\mu(R(a),b)}\\
		&+\mathrm{R\big(\mu(R(a),R(b)+\mu(a,R^2(b)))\big)+\kappa\mu(a,R(b))}\\
		=&\mathrm{R(\mu_R(R(a),b))+\kappa\mu(R(a),b)+R(\mu_R(a,R(b)))+\kappa\mu(a,R(b))}\\
		=&\mathrm{R(\mu_R(R(a),b)+\mu_R(a,R(b)))+\kappa \mu_R(a,b)}
	\end{align*}
	This means that $\mathrm{R}$ is a modified Rota-Baxter operator on $\mathrm{(A,\mu_R)}$ and since the equation \eqref{equation RBAD1} holds we complete the proof.
\end{proof}
\begin{thm}
	Let $(\mathrm{\mathcal{A},R,d})$ be a modified Rota-Baxter AssDer pair and $\mathrm{(M,\mathbf{l,r},R_M,d_M)}$ be a bimodule of it. Define linear maps for $\mathrm{a\in A}$ and $\mathrm{m\in M}$,
	\begin{eqnarray}
		\mathrm{\widetilde{\mathbf{l}}(a,m)}&=&\mathrm{\mathbf{l}(R(a),m)-R_M\circ \mathbf{l}(a,m)},\label{induced rep1}\\
		\mathrm{\widetilde{\mathbf{r}}(m,a)}&=&\mathrm{\mathbf{r}(m,R(a))-R_M\circ \mathbf{r}(m,a)}.\label{induced rep2}
	\end{eqnarray}
	Then $(\mathrm{M,\mathbf{\widetilde{l},\widetilde{r}},d_M})$ define a bimodule of the AssDer pair $\mathrm{(\mathcal{A}_R,d)}$. Moreover, $\mathrm{(M,\mathbf{\widetilde{l},\widetilde{r}},R_M,d_M)}$ is a bimodule of the modified Rota-Baxter AssDer pair $\mathrm{(\mathcal{A}_R,R,d)}$, we denote it by $\mathrm{(\widetilde{\mathcal{M}},R_M,d_M)}$ where $\mathrm{\widetilde{\mathcal{M}}=(M,\mathbf{\widetilde{l},\widetilde{r}})}$.
\end{thm}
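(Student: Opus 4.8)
The plan is to verify directly that the maps $\widetilde{\mathbf{l}}$ and $\widetilde{\mathbf{r}}$ defined by \eqref{induced rep1} and \eqref{induced rep2} satisfy the three bimodule axioms with respect to the multiplication $\mu_R$ of \eqref{induced multiplication}, then check compatibility with $d_M$, and finally check the two modified Rota-Baxter bimodule relations \eqref{rep mRBA1}--\eqref{rep mRBA2} for $R_M$ relative to $\mu_R$. The guiding observation is that this is the ``bimodule shadow'' of the two assertions already proved for $\mathcal{A}_R$: just as $R$ becomes a modified Rota-Baxter operator on $(A,\mu_R)$, the operator $R_M$ should become a modified Rota-Baxter bimodule map over $(\mathcal{A}_R,R)$, with the deformed actions $\widetilde{\mathbf{l}},\widetilde{\mathbf{r}}$ playing the role that $\mu_R$ plays on the algebra side. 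So wherever the proof of the induced-algebra proposition used the modified Rota-Baxter identity for $R$, the corresponding step here will use \eqref{rep mRBA1} or \eqref{rep mRBA2}.

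First I would check the left-module axiom $\widetilde{\mathbf{l}}(\mu_R(a,b),m)=\widetilde{\mathbf{l}}(a,\widetilde{\mathbf{l}}(b,m))$. Expanding the left side via \eqref{induced multiplication} and \eqref{induced rep1} gives $\mathbf{l}(R\mu(R(a),b),m)+\mathbf{l}(R\mu(a,R(b)),m)-R_M\mathbf{l}(\mu(R(a),b),m)-R_M\mathbf{l}(\mu(a,R(b)),m)$; expanding the right side and using the associativity axiom $\mathbf{l}(\mu(x,y),m)=\mathbf{l}(x,\mathbf{l}(y,m))$ repeatedly produces terms $\mathbf{l}(R(a),\mathbf{l}(R(b),m))$, $-\mathbf{l}(R(a),R_M\mathbf{l}(b,m))$, $-R_M\mathbf{l}(a,\mathbf{l}(R(b),m))$, $+R_M\mathbf{l}(a,R_M\mathbf{l}(b,m))$. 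Matching the two sides reduces, after using associativity to rewrite $\mathbf{l}(R\mu(R(a),b),m)=\mathbf{l}(R(a),R_M\cdot)+\dots$ is not literally available, so the real content is to apply \eqref{rep mRBA1} in the form $\mathbf{l}(R(a),R_M(n))=R_M(\mathbf{l}(R(a),n)+\mathbf{l}(a,R_M(n)))+\kappa\,\mathbf{l}(a,n)$ at the right moment to absorb the cross terms; the $\kappa$-terms should cancel against each other exactly as in the algebra computation. The right-module axiom is symmetric, using \eqref{rep mRBA2}, and the mixed axiom $\widetilde{\mathbf{l}}(a,\widetilde{\mathbf{r}}(m,b))=\widetilde{\mathbf{r}}(\widetilde{\mathbf{l}}(a,m),b)$ uses only the mixed $A$-bimodule axiom together with \eqref{rep mRBA1} and \eqref{rep mRBA2}.

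Next I would verify the derivation-compatibility relations \eqref{rep mRBAD1}--\eqref{rep mRBAD2} for $d_M$ with respect to $\widetilde{\mathbf{l}},\widetilde{\mathbf{r}}$ and the derivation $d$ on $\mathcal{A}_R$. Here $d(a)$ is unchanged (the derivation of $\mathcal{A}_R$ is the same $d$), so one computes $d_M(\widetilde{\mathbf{l}}(a,m))=d_M\mathbf{l}(R(a),m)-d_M R_M\mathbf{l}(a,m)$, applies \eqref{rep mRBAD1} to each $\mathbf{l}$-term, uses \eqref{rep mRBAD3} to swap $d_M R_M=R_M d_M$, and uses \eqref{equation RBAD1} to turn $d(R(a))$ into $R(d(a))$; collecting terms should give exactly $\widetilde{\mathbf{l}}(d(a),m)+\widetilde{\mathbf{l}}(a,d_M(m))$. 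The relation \eqref{rep mRBAD3}, namely $R_M\circ d_M=d_M\circ R_M$, is inherited verbatim from the hypothesis on $(\mathcal{M},R_M,d_M)$ and requires nothing. Finally, for the modified Rota-Baxter bimodule identities of $R_M$ over $(\mathcal{A}_R,R)$ one repeats, at the module level, the computation in assertion (2) of the preceding proposition: expand $\widetilde{\mathbf{l}}(R(a),R_M(m))$ using \eqref{induced rep1}, push $R$ and $R_M$ through using \eqref{rep mRBA1} applied to $\mathbf{l}(R^2(a),\cdot)$ and $\mathbf{l}(R(a),\cdot)$, and reorganize into $R_M(\widetilde{\mathbf{l}}(R(a),m)+\widetilde{\mathbf{l}}(a,R_M(m)))+\kappa\,\widetilde{\mathbf{l}}(a,m)$; the right action is symmetric via \eqref{rep mRBA2}.

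The computations are all of the same flavour as those already carried out in the excerpt, so no conceptual difficulty arises; the main obstacle is purely bookkeeping: each identity expands into six to eight terms, and one must be disciplined about when to apply the plain $A$-bimodule axioms versus the modified Rota-Baxter relations \eqref{rep mRBA1}--\eqref{rep mRBA2}, and about tracking the $\kappa$-terms so that they cancel or consolidate correctly. I would organize the write-up by proving the three $\widetilde{\mathbf{l}},\widetilde{\mathbf{r}}$-bimodule axioms first (one display each), then the two $d_M$-compatibilities, then the two $R_M$-compatibilities, citing \eqref{rep mRBA1}, \eqref{rep mRBA2}, \eqref{rep mRBAD1}, \eqref{rep mRBAD2}, \eqref{rep mRBAD3}, \eqref{equation RBAD1}, and \eqref{induced multiplication} at the appropriate lines, exactly mirroring the structure of the proof of the induced modified Rota-Baxter AssDer pair above.
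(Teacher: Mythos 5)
Your proposal is correct, and on the part of the statement that the paper actually proves in detail --- the compatibility of $\mathrm{d_M}$ with $\widetilde{\mathbf{l}}$ and $\widetilde{\mathbf{r}}$ via \eqref{rep mRBAD1}, \eqref{rep mRBAD3} and \eqref{equation RBAD1} --- your computation is exactly the one in the paper. The only difference is one of economy: the paper dispatches the first half of the claim (that $(\mathrm{M},\widetilde{\mathbf{l}},\widetilde{\mathbf{r}},\mathrm{R_M})$ is a bimodule of the modified Rota-Baxter algebra $(\mathcal{A}_R,\mathrm{R})$) by citing Proposition 2.12 of \cite{D21}, whereas you re-derive it by hand; your sketched verification of the left-module axiom (using \eqref{rep mRBA1} to absorb the cross term $\mathbf{l}(\mathrm{R(a)},\mathrm{R_M}\mathbf{l}(\mathrm{b,m}))$ and cancel the $\kappa$-terms) does go through, so this extra work is sound, just redundant relative to the paper's citation.
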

\begin{proof}
	According to (\cite{D21}, proposition 2.12) $\mathrm{(M,\mathbf{\widetilde{l},\widetilde{r}},R_M)}$
	is a bimodule of $\mathrm{(\mathcal{A}_R,R)}$. Let $\mathrm{a\in A,m\in M}$ and using equations \eqref{equation RBAD1} and \eqref{rep mRBAD3} we have
	\begin{align*}
		\mathrm{d_M\circ \widetilde{\mathbf{l}}(a,m)}&=\mathrm{d_M(\mathbf{l}(R(a),m)-R_M\circ \mathbf{l}(a,m))}\\
		&=\mathrm{d_M\circ \mathbf{l}(R(a),m)-d_M\circ R_M\circ \mathbf{l}(a,m)}\\
		&=\mathrm{\mathbf{l}(d\circ R(a),m)+\mathbf{l}(R(a),d_M(m))-R_M\circ \mathbf{l}(d(a),m)-\mathbf{l}(a,d_M(m))}\\
		&=\mathrm{\mathbf{l}(R\circ d(a),m)-R_M\circ \mathbf{l}(d(a),m)+\mathbf{l}(R(a),d_M(m))-R_M\circ \mathbf{l}(a,d_M(m))}\\
		&=\mathrm{\widetilde{\mathbf{l}}(d(a),m)+\widetilde{\mathbf{l}}(a,d_M(m))}.
	\end{align*}
	Similarly to $\widetilde{\mathbf{r}}$. This complete the proof.
\end{proof}
\section{Cohomologies of modified Rota-Baxter AssDer pairs}
\def\theequation{\arabic{section}.\arabic{equation}}
\setcounter{equation} {0}
In this section, we study cohomologies of modified Rota-Baxter AssDer pairs in an arbitrary bimodule. To enhance clarity in our construction, we recall cohomologies of modified Rota-Baxter associative algebras with coefficients in arbitrary bimodules..\\
Let $(\mathrm{\mathcal{M},R_M})$ be a bimodule of modified Rota-Baxter associative algebra $\mathrm{(\mathcal{A},R)}$. By (proposition 2.12 and propisition 2.9 \cite{Das2}), $\mathrm{(M,\mathbf{\widetilde{l},\widetilde{r}},R_M)}$ is a bimodule over the induced modified Rota-Baxter associative algebra $(\mathrm{\mathcal{A}_R,R})$ where $\mathrm{\mu_R,\mathbf{\widetilde{l},\widetilde{r}}}$ are given respectively by equations \eqref{induced multiplication}, \eqref{induced rep1} and \eqref{induced rep2}.

Define the $n$-cochains group $\mathrm{C^n_{mRBAA^\kappa}(A;M)}$ as follows
\begin{center}
	For $\mathrm{n=1}$, \quad $\mathrm{C^1_{mRBAA^\kappa}(A;M)=Hom(A,M)}$ and
\end{center}
\begin{equation*}
	\mathrm{C^n_{mRBAA^\kappa}(A;M)=C^n(A;M)\oplus C^{n-1}(A;M)},\quad \mathrm{n\geq2}.
\end{equation*}
The coboundary operator
\begin{equation*}
	\mathrm{\partial^n_{mRBAA^\kappa}:C^n_{mRBAA^\kappa}(A;M)\rightarrow C^{n+1}_{mRBAA^\kappa}(A;M)} \text{ is given by }
\end{equation*}
\begin{equation}
	\mathrm{\partial^n_{mRBAA^\kappa}}(\mathrm{f_n,g_{n-1}})=\Big(\mathrm{\delta^n_{Hoch}}(\mathrm{f_n}),-\mathrm{\delta^{n-1}_{mHoch}}\mathrm{(g_{n-1})-\mathrm{\phi^n}(f_n)}\Big),
\end{equation}
where
\begin{align*}
	\mathrm{\delta^n_{Hoch}}&:\mathrm{C^n(A;M)\rightarrow C^{n+1}(A;M)},\\
	\mathrm{\delta^{n-1}_{mHoch}}&:\mathrm{C^{n-1}(A_R;M)\rightarrow C^n(A_R;M)},\\
	\mathrm{\phi^n}&:\mathrm{C^n(A;M)\rightarrow C^{n}(A_R;M)},
\end{align*}
are defined respectively by:\\
\begin{align*}
	&\mathrm{\delta^n_{Hoch}(f_n)}\mathrm{(a_1,\cdots,a_{n+1})}\\
	=&\mathrm{(-1)^{n+1}\mathbf{l}(a_1,}\mathrm{f_n}(\mathrm{a_2,\cdots,a_{n+1}}))+\mathrm{\mathbf{r}}(\mathrm{f_n}(\mathrm{a_1,\cdots,a_n}),\mathrm{a_{n+1}})\\
	&+\mathrm{\displaystyle\sum_\mathrm{{i=1}}^\mathrm{n}\mathrm{(-1)^{i+n+1}}f_\mathrm{n}(\mathrm{a_1,\cdots,\mu(a_i,a_{i+1}),\cdots,a_{n+1}})},\\
	&\mathrm{\delta^n_{mHoch}}(\mathrm{f_n})\mathrm{(a_1,\cdots,a_{n+1})}\\
	=&\mathrm{(-1)^{n+1}\mathbf{l}(R(a_1),}\mathrm{f_n}(\mathrm{a_2,\cdots,a_{n+1}}))-\mathrm{(-1)^{n+1}R_M(\mathbf{l}(a_1,}\mathrm{f_n}\mathrm{(a_2,\cdots,a_{n+1})}))\\
	&+\mathrm{\mathbf{r}}(\mathrm{f_n}(\mathrm{a_1,\cdots,a_n}),\mathrm{R_M(a_{n+1})})-\mathrm{R_M}(\mathrm{\mathbf{r}}(\mathrm{f_n}(\mathrm{a_1,\cdots,a_n}),\mathrm{a_{n+1}}))\\
	&+\displaystyle\sum_\mathrm{{i=1}}^\mathrm{n}\mathrm{(-1)^{i+n+1}}\mathrm{f_n}(\mathrm{a_1,\cdots,\mu(R(a_i),a_{i+1})+\mu(a_i,R(a_{i+1})),\cdots,a_{n+1}}),\\
	&\mathrm{\phi^n(f_n)}(\mathrm{a_1,a_2,\ldots,a_n})\\
	=&\mathrm{f_n}(\mathrm{Ra_1,Ra_2,\ldots, Ra_n})\\
	&-\sum _{1\leq i_1<i_2< \cdots < i_r\leq n ,r~ \mbox{odd}}{(-\kappa)^{\frac{r-1}{2}}} (\mathrm{R_V}\circ \mathrm{f_n})(\mathrm{R(a_1),\ldots,a_{i_1},\ldots,a_{i_r},\ldots,R(a_n}))\\
	&-\sum _{1\leq i_1<i_2< \cdots < i_r\leq n ,r~ \mbox{even}}{(-\kappa)^{\frac{r}{2}+1}} (\mathrm{R_V}\circ \mathrm{f_n})(\mathrm{R(a_1),\ldots,a_{i_1},\ldots,a_{i_r},\ldots,R(a_n))}.
\end{align*}
Where $(\mathrm{C^\star(A;M),\delta^\star_{Hoch}})$ defines the Hochshild cochain complex and we denote its corresponding cohomology groups by $\mathrm{\mathcal{H}^\star(A;M)}$.\\
And $(\mathrm{C^\star(A_R;M),\delta^\star_{mHoch}})$ defines the Hochshild cochain complex  of the induced associative algebra $\mathrm{\mathcal{A}_R}$ with coefficients in the bimodule $\mathrm{\widetilde{\mathcal{M}}}$. We denote its cohomology groups by $\mathrm{\mathcal{H}^\star(A_R;M)}$.\\
Finaly $\phi^\star$ defines a morphism of cochain complexes from $(\mathrm{C^\star(A;M),\delta^\star_{Hoch}})$ to $(\mathrm{C^\star(A_R;M),\delta^\star_{mHoch}})$.\\

According to the cochain complex $(\mathrm{C^\star_{mRBAA^\kappa}(A;M),\partial^\star_{mRBAA^\kappa}})$ the set of all $n$-cocycles and $n$-coboundaries are denoted by, respectively, $\mathrm{\mathcal{Z}^n_{mRBAA^\kappa}(A;M)}$ and $\mathrm{\mathcal{B}^n_{mRBAA^\kappa}(A;M)}$. Define the corresponding cohomology group by
\begin{equation*}
	\mathrm{\mathcal{H}^n_{\mathrm{mRBAA^{\kappa}}}(A,M):=\frac{\mathcal{Z}^n_{\mathrm{mRBAA^{\kappa}}}(A,M)}
	{\mathcal{B}^n_{\mathrm{mRBAA^{\kappa}}}(A,M)},\quad \text{for } n\geq1},
\end{equation*}
which is called the $n$-cohomology group of $(\mathrm{\mathcal{A},R})$ with coefficients in the bimodule $(\mathrm{\mathcal{M},R_M})$.\\
%From proposition (3.2) \cite{A1} we have the following
%\begin{equation}\label{equation morphism and coboboundary}
%	\mathrm{\phi^{n+1}(\delta^n_\mathrm{Hoch}(f_n))=\delta^n_\mathrm{mHoch}
%		(\phi^n(f_n)),\quad \text{where } f_n\in C^n(A;V)}.
%\end{equation}
Define a linear map

\begin{equation*}
	\mathrm{\Delta^n:C^n_\mathrm{mRBAA^\kappa}(A;M)\rightarrow C^n_\mathrm{mRBAA^\kappa}(A;M)\quad \text{ by }}
\end{equation*}
\begin{equation}\label{coboundary derivation}
	\mathrm{\Delta^n(f_n,g_{n-1}):=(\Delta^n(f_n),\Delta^{n-1}(g_{n-1})),\quad \forall (f_n,g_{n-1})}\in C^n_\mathrm{mRBAA^\kappa}(\mathrm{A;M}).
\end{equation}
Where
\begin{equation*}
	\mathrm{\Delta^n (f_n)=\displaystyle\sum_{i=1}^nf_n\circ (\mathrm{Id_A}\otimes \cdots\otimes d\otimes \cdots\otimes \mathrm{Id_A})-d_{\mathrm{M}}\circ f_n}.
\end{equation*}
\begin{pro}
	With the previous result, we have the following
	\begin{equation}\label{coboundary3}
		\mathrm{\phi^n\circ\Delta^n=\Delta^n\circ\phi^n,\quad \forall n\geq1.}
	\end{equation}
\end{pro}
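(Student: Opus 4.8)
The claim is the commutation identity $\mathrm{\phi^n\circ\Delta^n=\Delta^n\circ\phi^n}$, where on the left $\Delta^n$ acts on $\mathrm{C^n(A;M)}$ via the derivations $\mathrm{d}$ and $\mathrm{d_M}$, and on the right it acts on $\mathrm{C^n(A_R;M)}$ (same formula, but one uses the compatibility $\mathrm{R\circ d=d\circ R}$ to see the two copies of $\Delta^n$ are the ``same'' operator adapted to the respective bimodule structures). The plan is to verify the equality by evaluating both sides on an arbitrary $\mathrm{f_n\in C^n(A;M)}$ and arbitrary arguments $\mathrm{a_1,\dots,a_n\in A}$, and comparing term by term. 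Recall $\mathrm{\phi^n(f_n)}$ is a sum of three types of terms: the ``diagonal'' term $\mathrm{f_n(Ra_1,\dots,Ra_n)}$, and two families of terms of the shape $\mathrm{(-\kappa)^{\bullet}\,R_M\circ f_n(R(a_1),\dots,a_{i_1},\dots,a_{i_r},\dots,R(a_n))}$ indexed by subsets $\{i_1<\dots<i_r\}$ of odd (resp.\ even) size, where the listed indices carry a bare argument and all others carry an $R$.

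The key step is the following Leibniz-type bookkeeping. Applying $\Delta^n$ means summing over slots $j$ the operator that inserts $\mathrm{d}$ in slot $j$, and subtracting $\mathrm{d_M}$ applied on the outside. For the diagonal term of $\mathrm{\phi^n}$: inserting $\mathrm{d}$ in slot $j$ of $\mathrm{f_n(Ra_1,\dots,Ra_n)}$ produces $\mathrm{f_n(Ra_1,\dots,d(Ra_j),\dots,Ra_n)}$, and by \eqref{equation RBAD1} this equals $\mathrm{f_n(Ra_1,\dots,R(da_j),\dots,Ra_n)}$, which is exactly the slot-$j$ term of $\mathrm{\phi^n}$ applied to the slot-$j$ term of $\Delta^n f_n$ (computed in $A$); the outer $\mathrm{d_M}$ terms also match directly. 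The same mechanism works for each subset-indexed term: pushing $\mathrm{d}$ into a slot that carries $R$ converts $\mathrm{d\circ R}$ into $\mathrm{R\circ d}$ via \eqref{equation RBAD1}, pushing it into a bare slot does nothing structural, and on the outside the $\mathrm{R_M}$ factor commutes with $\mathrm{d_M}$ by \eqref{rep mRBAD3}, so $\mathrm{d_M\circ R_M\circ f_n=R_M\circ d_M\circ f_n}$, which is precisely what is needed to identify the outer contribution on both sides. Since $\Delta^n$ and $\phi^n$ are both linear and $\Delta^n$ acts as a derivation over the ``substitution'' structure that defines $\phi^n$, summing these matched contributions over all slots and all subsets yields the identity.

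The main obstacle, such as it is, is purely combinatorial rather than conceptual: one must be careful that the slot-insertion of $\mathrm{d}$ interacts correctly with the subset labelling in $\mathrm{\phi^n}$. Concretely, when $\Delta^n$ inserts $\mathrm{d}$ in slot $i_k$ (a bare slot) versus a slot carrying $R$, the resulting term must be reinterpreted as belonging to $\mathrm{\phi^n(\Delta^n f_n)}$ with the \emph{same} subset $\{i_1<\dots<i_r\}$; one checks that the powers of $(-\kappa)$ and the parity constraints are unaffected because $\Delta^n$ never changes the arity $n$ nor the subset, only what sits in a single slot. Thus no cancellation or reindexing across different subsets is required — every term on the left matches a unique term on the right — and the proof reduces to writing out this correspondence. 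I would present it by expanding $\mathrm{\Delta^n(\phi^n(f_n))}$, applying \eqref{equation RBAD1} and \eqref{rep mRBAD3} to each summand, and recognizing the result as $\mathrm{\phi^n(\Delta^n(f_n))}$; the second component of the cochain (the $g_{n-1}$ slot) requires nothing, since $\phi^n$ only involves the first component and $\Delta^n$ is defined componentwise by \eqref{coboundary derivation}.
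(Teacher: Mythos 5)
Your proposal is correct and follows essentially the same route as the paper: expand $\phi^n$ and $\Delta^n$ on an arbitrary $\mathrm{f_n}$, match the diagonal and subset-indexed terms slot by slot using $\mathrm{R\circ d=d\circ R}$ for the $R$-carrying slots, and use $\mathrm{R_M\circ d_M=d_M\circ R_M}$ to identify the outer contributions. The combinatorial point you flag — that the subset labels and powers of $(-\kappa)$ are untouched by the slot insertion — is exactly what makes the paper's term-by-term identification go through.
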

\begin{proof}
	For $f_n\in C^n(A;M)$ and $\mathrm{a_1,\cdots,a_n}\in A$ we have
	\begin{align*}
		&\mathrm{\phi^n\circ\Delta^n(f_n)(\mathrm{a_1,\cdots,a_n})}\\
		=&\mathrm{\Delta^n(f_n(\mathrm{R(a_1)},\cdots\mathrm{R(a_n)}))}\\
		&\mathrm{-\sum _{1\leq i_1<i_2< \cdots < i_r\leq n ,r~ \mbox{odd}}{(-\kappa)^{\frac{r-1}{2}}} (R_M\circ\Delta^n f_n)(\mathrm{R(a_1),\ldots,a_{i_1},\ldots,a_{i_r},\ldots,R(a_n)})}\\
		&\mathrm{-\sum _{1\leq i_1<i_2< \cdots < i_r\leq n ,r~ \mbox{even}}{(-\kappa)^{\frac{r}{2}+1}} (R_M\circ\Delta^n f_n)(\mathrm{R(a_1),\ldots,a_{i_1},\ldots,a_{i_r},\ldots,R(a_n)})}\\
		=&\mathrm{\displaystyle\sum_{\mathrm{k=1}}^nf_n(\mathrm{R(a_1),\cdots,d\circ R(a_i),\cdots,R(a_n)})-\mathrm{d_M}\circ f_n\mathrm{(R(a_1),\cdots,R(a_n))}}\\
		&-\mathrm{\sum _{1\leq i_1<i_2< \cdots < i_r\leq n ,r~ \mbox{odd}}{(-\kappa)^{\frac{r-1}{2}}}\Big( \displaystyle\sum_{\mathrm{k\neq i_1,\cdots,i_r}}(\mathrm{R_M}\circ f_n)\mathrm{(R(a_1),\cdots,d\circ R(a_k),\cdots,a_{i_1},\cdots,a_{i_r},\cdots,R(a_n))}}\\
		&+\mathrm{(\mathrm{R_M}\circ f_n)\displaystyle\sum_{\mathrm{p=1}}^r(\mathrm{R(a_1),\cdots,a_{i_1},\cdots,d_M(a)_{i_p},\cdots,a_{i_r},\cdots,R(a_n)})}\\
		&-\mathrm{(\mathrm{R_M}\circ \mathrm{d_M}f_n)(\mathrm{(R(a_1),\cdots,a_{i_1},\cdots,a_{i_r},\cdots,R(a_n))})\Big)}\\
		&-\mathrm{\sum _{1\leq i_1<i_2< \cdots < i_r\leq n ,r~ \mbox{even}}{(-\kappa)^{\frac{r}{2}+1}}\Big( \displaystyle\sum_{\mathrm{k\neq i_1,\cdots,i_r}}(\mathrm{R_M}\circ f_n)\mathrm{(R(a_1),\cdots,d\circ R(a_k),\cdots,a_{i_1},\cdots,a_{i_r},\cdots,R(a_n))}}\\
		&+\mathrm{(\mathrm{R_M}\circ f_n)\displaystyle\sum_{\mathrm{p=1}}^r(\mathrm{R(a_1),\cdots,a_{i_1},\cdots,d(a_{i_p}),\cdots,a_{i_r},\cdots,Ra_n})}\\
		&-\mathrm{(\mathrm{R_M}\circ \mathrm{d_M}f_n)(\mathrm{(R(a_1),\cdots,a_{i_1},\cdots,a_{i_r},\cdots,R(a_n))})\Big)}\\
		=&\mathrm{\Delta^n\circ\phi^n(f_n)(\mathrm{a_1,\cdots,a_n}})
	\end{align*}
\end{proof}
	Recall that, from the cohomology of AssDer pair (\cite{A2}. Lemma 1), we have
\begin{equation}\label{coboundary1}
	\mathrm{\delta^n_\mathrm{Hoch}\circ\Delta^n=\Delta^{n+1}\circ\delta^n_\mathrm{Hoch}}.
\end{equation}

Also since $\mathrm{\mathcal{A}_{\mathrm{R}}}$ is an associative algebra and
$\mathrm{\delta_\mathrm{mHoch}^\star}$ its coboundary with respect to the bimodule
$\mathrm{(M,\mathbf{\widetilde{l},\widetilde{r}})}$ and  $\mathrm{(\mathcal{A}_{\mathrm{R}},d)}$ is an AssDer pair then we get
\begin{equation}\label{coboundary2}
	\mathrm{\delta^n_\mathrm{mHoch}\circ\Delta^n=	\Delta^{n+1}\circ\delta^n_\mathrm{mHoch}}.
\end{equation}
	\begin{pro}
	With the above notations, $\Delta^\star$ is a cochain map, i.e.
	\begin{equation}\label{coboundary5}
		\mathrm{\partial^n_\mathrm{mRBAA^\kappa}\circ \Delta^n
			=\Delta^{n+1}\circ\partial^n_\mathrm{mRBAA^\kappa}},\quad \forall n\geq1.
	\end{equation}
\end{pro}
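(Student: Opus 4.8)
The statement to prove is that $\Delta^\star$ commutes with the coboundary operator $\partial^\star_{\mathrm{mRBAA}^\kappa}$ of the cohomology complex of the modified Rota-Baxter AssDer pair. The plan is to unwind both sides of \eqref{coboundary5} using the explicit block form of $\partial^n_{\mathrm{mRBAA}^\kappa}$ and of $\Delta^n$, and then to reduce everything to the three compatibility identities \eqref{coboundary1}, \eqref{coboundary2}, and \eqref{coboundary3} already established (the first two being recalled from the AssDer-pair cohomology and the Hochschild cohomology of $\mathcal A_R$, the third being the proposition proved just above). First I would fix $(f_n, g_{n-1}) \in C^n_{\mathrm{mRBAA}^\kappa}(A;M) = C^n(A;M) \oplus C^{n-1}(A;M)$ and compute $\partial^n_{\mathrm{mRBAA}^\kappa}\big(\Delta^n(f_n,g_{n-1})\big)$. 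Since $\Delta^n(f_n,g_{n-1}) = (\Delta^n f_n, \Delta^{n-1} g_{n-1})$, applying the definition of $\partial^n_{\mathrm{mRBAA}^\kappa}$ gives
\[
\Big(\delta^n_{\mathrm{Hoch}}(\Delta^n f_n),\ -\delta^{n-1}_{\mathrm{mHoch}}(\Delta^{n-1} g_{n-1}) - \phi^n(\Delta^n f_n)\Big).
\]

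\textbf{Key steps.}
Next I would compute the other composite $\Delta^{n+1}\big(\partial^n_{\mathrm{mRBAA}^\kappa}(f_n,g_{n-1})\big)$. By definition $\partial^n_{\mathrm{mRBAA}^\kappa}(f_n,g_{n-1}) = \big(\delta^n_{\mathrm{Hoch}}(f_n),\ -\delta^{n-1}_{\mathrm{mHoch}}(g_{n-1}) - \phi^n(f_n)\big)$, which lies in $C^{n+1}(A;M)\oplus C^n(A;M)$, so applying $\Delta^{n+1}$ componentwise yields
\[
\Big(\Delta^{n+1}\big(\delta^n_{\mathrm{Hoch}} f_n\big),\ \Delta^n\big(-\delta^{n-1}_{\mathrm{mHoch}} g_{n-1} - \phi^n(f_n)\big)\Big).
\]
Comparing the two expressions coordinatewise, the equality in the first slot is exactly \eqref{coboundary1}, namely $\delta^n_{\mathrm{Hoch}}\circ \Delta^n = \Delta^{n+1}\circ \delta^n_{\mathrm{Hoch}}$. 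For the second slot, by linearity of $\Delta^n$ it suffices to match $-\delta^{n-1}_{\mathrm{mHoch}}(\Delta^{n-1} g_{n-1})$ with $-\Delta^n(\delta^{n-1}_{\mathrm{mHoch}} g_{n-1})$, which is \eqref{coboundary2}, and $-\phi^n(\Delta^n f_n)$ with $-\Delta^n(\phi^n f_n)$, which is \eqref{coboundary3}. Assembling these three identities gives $\partial^n_{\mathrm{mRBAA}^\kappa}\circ \Delta^n = \Delta^{n+1}\circ \partial^n_{\mathrm{mRBAA}^\kappa}$, as desired.

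\textbf{Main obstacle.}
The genuine content of the argument is entirely contained in the three lemmas \eqref{coboundary1}, \eqref{coboundary2}, \eqref{coboundary3}; once those are in hand, the present proposition is a purely formal bookkeeping of the two-component cochains, and the only care needed is to track signs correctly and to use linearity of $\Delta^n$ to split the second component. The hardest of the three inputs is \eqref{coboundary3}, $\phi^n\circ\Delta^n = \Delta^n\circ\phi^n$, because $\phi^n$ involves the combinatorially intricate sums over odd- and even-sized index subsets with the $(-\kappa)$-weights; but that identity was dispatched in the preceding proposition, so here I would simply cite it. Thus I expect no real obstacle in this proof beyond careful notation; I would present it as a short three-line verification invoking \eqref{coboundary1}, \eqref{coboundary2}, and \eqref{coboundary3}. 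One minor point worth a sentence: one should double-check that $\Delta^n$ is indeed defined on $C^{n-1}(A;M)$ as well (it is, with the same formula shifted in degree), so that $\Delta^{n+1}$ applied to the second component of $\partial^n_{\mathrm{mRBAA}^\kappa}(f_n,g_{n-1})$ — which is a genuine element of $C^n(A;M)$ — makes sense and equals $\Delta^n$ acting on that slot.
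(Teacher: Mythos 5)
Your proposal is correct and follows essentially the same route as the paper: both compute the two composites componentwise on $(f_n,g_{n-1})$ and reduce the equality to the three commutation identities \eqref{coboundary1}, \eqref{coboundary2}, and \eqref{coboundary3} together with linearity of $\Delta^n$. Nothing is missing.
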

\begin{proof}
	For $\mathrm{(f_n,g_{n-1})\in C^n_\mathrm{mRBAA^\kappa}(A;M)}$ and by using \eqref{coboundary1}, \eqref{coboundary2}, \eqref{coboundary3} and \eqref{coboundary derivation} we have
	\begin{align*}
		\mathrm{\partial^n_\mathrm{mRBAA^\kappa}(\Delta^n(f_n,g_{n-1}))}
		&=\mathrm{\partial^n_\mathrm{mRBAA^\kappa}(\Delta^nf_n,\Delta^{n-1}g_{n-1})}\\
		&=\mathrm{\Big(\delta^n_\mathrm{Hoch}(\Delta^nf_n),-\delta^{n-1}_\mathrm{mHoch}
			(\Delta^{n-1}g_{n-1})-\phi^n(\Delta^nf_n)\Big)}\\
		&\mathrm{=\Big(\Delta^{n+1}(\delta^n_\mathrm
			{Hoch}f_n),-\Delta^n(\delta^{n-1}_\mathrm{mHoch}g_{n-1})-\Delta^n(\phi^nf_n)\Big)}\\
		&\mathrm{=\Delta^{n+1}(\partial^n_\mathrm{mRBAA^\kappa}(f_n,g_{n-1}))}.
	\end{align*}
This complete the proof.
\end{proof}
	Using all those tools we are in position to define the cohomology of modified Rota-Baxter AssDer pair
$\mathrm{(\mathcal{A},R,d)}$ with coefficients in a bimodule $\mathrm{(\mathcal{M},R_\mathrm{M},d_\mathrm{M})}$.\\
Denote
\begin{equation*}
	\mathrm{\mathfrak{C}^n_{\mathrm{mRBAD^{\kappa}}}(A;M):=C^n_\mathrm{mRBAA^\kappa}(A;M)\times
		C^{n-1}_{\mathrm{mRBAA^{\kappa}}}(A;M),\quad n\geq2},
\end{equation*}
and
\begin{equation*}
	\mathrm{\mathfrak{C}^1_{\mathrm{mRBAD^{\kappa}}}(A;M):=C^1(A;M)}.
\end{equation*}
Define a linear map
\begin{eqnarray*}
	\mathrm{\mathfrak{D}^1_{\mathrm{mRBAD^{\kappa}}}}&:&\mathrm{\mathfrak{C}^1_{\mathrm{mRBAD^{\kappa}}}
		(A;M)\rightarrow \mathfrak{C}^2_{\mathrm{mRBAD^{\kappa}}}(A;M) \text{ by}}\\
	&&\mathrm{\mathfrak{D}^1_{\mathrm{mRBAD^{\kappa}}}(f_1)
		=(\partial^1_\mathrm{mRBAA^\kappa}(f_1),-\Delta^1(f_1)),\quad \forall f_1\in
		C^1(A;M)},
\end{eqnarray*}
%%%%%%%%%%%%%%%%%%%%%%%%%%%%%%%%%%%%%%%%%%%%%%%%%%
and when $\mathrm{n\geq2}$

\begin{equation*}
	\mathrm{\mathfrak{D}^n_{\mathrm{mRBAD^{\kappa}}}:\mathfrak{C}^n_{\mathrm{mRBAD^{\kappa}}}(A;M)\rightarrow
		\mathfrak{C}^{n+1}_{\mathrm{mRBAD^{\kappa}}}(A;M)}
\end{equation*}
is defined by
\begin{equation}\label{mRBLD coboundary}
	\mathrm{\mathfrak{D}^n_{\mathrm{mRBAD^{\kappa}}}((f_n,g_{n-1}),(h_{n-1},s_{n-2}))
		=(\partial^n_\mathrm{mRBAA^\kappa}(f_n,g_{n-1}),\partial^{n-1}_\mathrm{mRBAA^\kappa}(h_{n-1},s_{n-2})
		+(-1)^n\Delta^n(f_n,g_{n-1}))}.
\end{equation}
	\begin{thm}
	According to The above notations we have
	$\mathrm{\big(\mathfrak{C}^\star_\mathrm{mRBAD^\kappa}(A;M),\mathfrak{D}^\star_{\mathrm{mRBAD^{\kappa}}}\big)}$ is a cochain complex, i.e,
	\begin{equation*}
		\mathrm{\mathfrak{D}^{n+1}_{\mathrm{mRBAD^{\kappa}}}\circ \mathfrak{D}^n_{\mathrm{mRBAD^{\kappa}}}=0,
			\quad \forall n\geq1}.
	\end{equation*}
\end{thm}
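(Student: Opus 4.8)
The plan is to verify $\mathrm{\mathfrak{D}^{n+1}_{\mathrm{mRBAD^{\kappa}}}\circ \mathfrak{D}^n_{\mathrm{mRBAD^{\kappa}}}=0}$ by unwinding the definition of $\mathrm{\mathfrak{D}^\star}$ componentwise and reducing everything to the three facts already established: that $\mathrm{\partial^\star_\mathrm{mRBAA^\kappa}}$ is a coboundary operator (i.e. $\mathrm{\partial^{n+1}_\mathrm{mRBAA^\kappa}\circ\partial^n_\mathrm{mRBAA^\kappa}=0}$, which follows from $\mathrm{\delta_\mathrm{Hoch}}$ and $\mathrm{\delta_\mathrm{mHoch}}$ being square-zero and $\phi^\star$ being a cochain map, as recalled in the previous section), that $\mathrm{\Delta^\star}$ squares to zero (this is inherited from the AssDer side, cf. the analogue of \eqref{coboundary1}--\eqref{coboundary2} applied twice), and crucially the cochain-map identity \eqref{coboundary5}, namely $\mathrm{\partial^n_\mathrm{mRBAA^\kappa}\circ\Delta^n=\Delta^{n+1}\circ\partial^n_\mathrm{mRBAA^\kappa}}$.

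First I would treat the edge case $n=1$ separately, since $\mathrm{\mathfrak{C}^1}$ and $\mathrm{\mathfrak{D}^1}$ have a special shape. For $f_1\in C^1(A;M)$ one computes
\begin{align*}
	\mathrm{\mathfrak{D}^2_{\mathrm{mRBAD^{\kappa}}}\circ\mathfrak{D}^1_{\mathrm{mRBAD^{\kappa}}}(f_1)}
	&=\mathrm{\mathfrak{D}^2_{\mathrm{mRBAD^{\kappa}}}\big(\partial^1_\mathrm{mRBAA^\kappa}(f_1),-\Delta^1(f_1)\big)}\\
	&=\mathrm{\Big(\partial^2_\mathrm{mRBAA^\kappa}\partial^1_\mathrm{mRBAA^\kappa}(f_1),\ \partial^1_\mathrm{mRBAA^\kappa}(-\Delta^1 f_1)+(-1)^2\Delta^2\partial^1_\mathrm{mRBAA^\kappa}(f_1)\Big)},
\end{align*}
and the first slot vanishes because $\mathrm{\partial^\star_\mathrm{mRBAA^\kappa}}$ is square-zero, while the second slot is $\mathrm{-\partial^1_\mathrm{mRBAA^\kappa}\Delta^1 f_1+\Delta^2\partial^1_\mathrm{mRBAA^\kappa}f_1}$, which is zero by \eqref{coboundary5}.

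Next, for $n\geq 2$, I would apply $\mathrm{\mathfrak{D}^{n+1}_{\mathrm{mRBAD^{\kappa}}}}$ to the pair $\mathrm{\mathfrak{D}^n_{\mathrm{mRBAD^{\kappa}}}\big((f_n,g_{n-1}),(h_{n-1},s_{n-2})\big)=\big(\partial^n_\mathrm{mRBAA^\kappa}(f_n,g_{n-1}),\ \partial^{n-1}_\mathrm{mRBAA^\kappa}(h_{n-1},s_{n-2})+(-1)^n\Delta^n(f_n,g_{n-1})\big)}$ and read off the two components of the result. The first component is $\mathrm{\partial^{n+1}_\mathrm{mRBAA^\kappa}\partial^n_\mathrm{mRBAA^\kappa}(f_n,g_{n-1})=0}$. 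The second component is
\begin{equation*}
	\mathrm{\partial^n_\mathrm{mRBAA^\kappa}\Big(\partial^{n-1}_\mathrm{mRBAA^\kappa}(h_{n-1},s_{n-2})+(-1)^n\Delta^n(f_n,g_{n-1})\Big)+(-1)^{n+1}\Delta^{n+1}\partial^n_\mathrm{mRBAA^\kappa}(f_n,g_{n-1})},
\end{equation*}
which splits as $\mathrm{\partial^n_\mathrm{mRBAA^\kappa}\partial^{n-1}_\mathrm{mRBAA^\kappa}(h_{n-1},s_{n-2})}$, vanishing by square-zeroness, plus $\mathrm{(-1)^n\big(\partial^n_\mathrm{mRBAA^\kappa}\Delta^n-\Delta^{n+1}\partial^n_\mathrm{mRBAA^\kappa}\big)(f_n,g_{n-1})}$, vanishing by \eqref{coboundary5}. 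Hence both components are zero.

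The only genuine work, and the step I expect to be the main obstacle, is establishing cleanly that $\mathrm{\partial^\star_\mathrm{mRBAA^\kappa}}$ is itself a differential on $\mathrm{C^\star_\mathrm{mRBAA^\kappa}(A;M)}$; in the component $\mathrm{\partial^{n+1}\partial^n(f_n,g_{n-1})}$ the second slot produces the cross-term $\mathrm{-\delta^n_\mathrm{mHoch}\phi^n(f_n)+\phi^{n+1}\delta^n_\mathrm{Hoch}(f_n)}$, whose vanishing is exactly the assertion that $\phi^\star$ is a morphism of cochain complexes from $(\mathrm{C^\star(A;M)},\delta^\star_\mathrm{Hoch})$ to $(\mathrm{C^\star(A_R;M)},\delta^\star_\mathrm{mHoch})$ — a fact quoted from \cite{Das2} in the setup. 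Once that bookkeeping lemma is in hand, together with \eqref{coboundary5} and the square-zeroness of $\mathrm{\Delta^\star}$ (the latter following by the same two-fold application of the AssDer identities \eqref{coboundary1}, \eqref{coboundary2} and \eqref{coboundary3} that gave $\mathrm{\Delta^\star}$ its cochain-map property), the computation above closes and the proof is complete.
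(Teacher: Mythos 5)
Your proposal is correct and follows essentially the same route as the paper: unwind $\mathfrak{D}^{n+1}\circ\mathfrak{D}^n$ componentwise, kill the first slot and the $\partial\partial$ term in the second slot by the square-zeroness of $\partial^\star_{\mathrm{mRBAA^\kappa}}$, and cancel the remaining cross-terms via the cochain-map identity \eqref{coboundary5}. Your separate treatment of $n=1$ and your remark that square-zeroness of $\partial^\star_{\mathrm{mRBAA^\kappa}}$ rests on $\phi^\star$ being a morphism of complexes are sensible additions (the paper leaves both implicit); note only that the square-zeroness of $\Delta^\star$ you invoke at the end is never actually used in the computation.
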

\begin{proof}
	For $\mathrm{n\geq1}$, using equations \eqref{mRBLD coboundary},  \eqref{coboundary5}
	
	\begin{align*}
		&\mathrm{\mathfrak{D}^{n+1}_{\mathrm{mRBAD^{\kappa}}}\circ\mathfrak{D}^n_{\mathrm{mRBAD^{\kappa}}}
			((f_n,g_{n-1}),(h_{n-1},s_{n-2}))}\\
		=&\mathrm{\mathfrak{D}^{n+1}_{\mathrm{mRBAD^{\kappa}}}(\partial^n_\mathrm{mRBAA^\kappa}(f_n,g_{n-1}),
			\partial^{n-1}_\mathrm{mRBAA^\kappa}(h_{n-1},s_{n-2})+(-1)^n\Delta^n(f_n,g_{n-1}))}\\
		=&\mathrm{\Big(\partial^{n+1}_\mathrm{mRBAA^\kappa}(\partial^n_\mathrm{mRBAA^\kappa}(f_n,g_{n-1})),
			\partial^n_\mathrm{mRBAA^\kappa}(\partial^{n-1}_\mathrm{mRBAA^\kappa}(h_{n-1},s_{n-2})
			+(-1)^n\Delta^n(f_n,g_{n-1})}\\
		&\mathrm{+(-1)^{n+1}\Delta^{n+1}(\partial^n_\mathrm{mRBAA^\kappa}(f_n,g_{n-1})\Big)}\\
		=&\mathrm{\Big((0,0),(-1)^n\partial^n_\mathrm{mRBAA^\kappa}(\Delta^n(f_n,g_{n-1}))
			+(-1)^{n+1}\Delta^{n+1}(\partial^n_\mathrm{mRBAA^\kappa}(f_n,g_{n-1})\Big)}\\
		&=0.
	\end{align*}
	This complete the proof.
\end{proof}
	With respect to the bimodule $\mathrm{(\mathcal{M},\mathrm{R_M},\mathrm{d_M})}$ we obtain a complex
$\mathrm{\Big(\mathfrak{C}^{\star}_{\mathrm{mRBAD^{\kappa}}}
	(A,M),\mathfrak{D}^\star_{\mathrm{mRBAD^{\kappa}}}\Big)}$. Let $\mathrm{Z^n_{\mathrm{mRBAD^{\kappa}}}(A,M)}$ and
$\mathrm{B^n_{\mathrm{mRBAD^{\kappa}}}(A,M)}$ denote the space of $\mathrm{n}$-cocycles and $\mathrm{n}$-coboundaries, respectively.
Then we define the corresponding cohomology groups by
\begin{equation*}
	\mathrm{\mathcal{H}^n_{\mathrm{mRBAD^{\kappa}}}(A,M):=\frac{Z^n_{\mathrm{mRBAD^{\kappa}}}(A,M)}
	{B^n_{\mathrm{mRBAD^{\kappa}}}(A,M)},\quad \text{for } n\geq1.}
\end{equation*}
They are called \textbf{the cohomology of modified Rota-Baxter AssDer pair} $\mathrm{(\mathcal{A},R,d)}$ with coefficients
in the bimodule $\mathrm{(\mathcal{M},R_M,d_{\mathrm{M}})}$.
\newline

\noindent{\textbf{Relation with the cohomology of modified Rota-Baxter LieDer pairs.}} Now we study \textbf{connection} between the cohomology of modified Rota-Baxter AssDer pair and the cohomology of modified Rota-Baxter LieDer pair. By proposition \eqref{symmetrization}, we have $(\mathrm{\mathcal{A}_C,R,d})$ is a modified Rota-Baxter LieDer pair. In the following proposition we construct a representation of it.
\begin{pro}
	Let $(\mathrm{\mathcal{A},R,d})$ be a modified Rota-Baxter AssDer pair and $(\mathrm{\mathcal{M},R_M,d_M})$ be a module of it. Then $(\mathrm{M,\rho,R_M,d_M})$ is a representation of the modified Rota-Baxter LieDer pair $(\mathrm{\mathcal{A}_C,R,d})$ where the linear map $\rho:\mathrm{A\rightarrow gl(M)}$ is given by
	\begin{equation*}
		\mathrm{\rho(a)(m):=\mathbf{l}(a,m)-\mathbf{r}(m,a),\quad \forall a\in A,m\in M}.
	\end{equation*}
\end{pro}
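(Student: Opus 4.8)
The plan is to verify directly that $(\mathrm{M},\rho,\mathrm{R_M},\mathrm{d_M})$ satisfies the three axioms of a representation of a modified Rota-Baxter LieDer pair, splitting the work into the classical Lie-representation part, the modified-Rota-Baxter compatibility, and the derivation compatibility. First I would check that $\rho$ is a representation of the Lie algebra $\mathcal{A}_C=(\mathrm{A},[-,-]_C)$ in the usual sense, i.e. $\rho([a,b]_C)=\rho(a)\rho(b)-\rho(b)\rho(a)$; this is the standard computation expanding $[a,b]_C=\mu(a,b)-\mu(b,a)$ and using the three bimodule identities for $(\mathrm{M},\mathbf{l},\mathbf{r})$ (associativity of $\mathbf{l}$, the mixed relation $\mathbf{l}(a,\mathbf{r}(m,b))=\mathbf{r}(\mathbf{l}(a,m),b)$, and associativity of $\mathbf{r}$). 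It is purely routine.

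Next I would verify the modified Rota-Baxter compatibility for the pair $(\rho,\mathrm{R_M})$ with respect to $\mathrm{R}$ on $\mathcal{A}_C$, namely the analogue of \eqref{rep mRBA1}: one must show
\begin{equation*}
\mathrm{\rho(R(a))(R_M(m))=R_M\big(\rho(R(a))(m)+\rho(a)(R_M(m))\big)+\kappa\,\rho(a)(m)},\quad\forall a\in A,\ m\in M.
\end{equation*}
Here I would substitute $\rho(a)(m)=\mathbf{l}(a,m)-\mathbf{r}(m,a)$ on both sides and split the identity into its $\mathbf{l}$-part and $\mathbf{r}$-part: the $\mathbf{l}$-terms are governed exactly by \eqref{rep mRBA1} and the $\mathbf{r}$-terms by \eqref{rep mRBA2} (with the sign bookkeeping matching because $\rho$ carries a relative minus sign on $\mathbf{r}$). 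The identity \eqref{equation RBAD1} is not needed here, only the bimodule-over-$(\mathcal{A},\mathrm{R})$ axioms; the $\kappa\mu(a,m)$ and $\kappa r(m,a)$ terms from \eqref{rep mRBA1}–\eqref{rep mRBA2} assemble precisely into $\kappa\,\rho(a)(m)$.

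Finally I would check the two derivation axioms: that $\mathrm{d_M}$ is a derivation for $\rho$ over $(\mathcal{A}_C,\mathrm{d})$, i.e. $\mathrm{d_M(\rho(a)(m))=\rho(d(a))(m)+\rho(a)(d_M(m))}$, which follows by subtracting \eqref{rep mRBAD2} from \eqref{rep mRBAD1}; and that $\mathrm{R_M\circ d_M=d_M\circ R_M}$, which is literally \eqref{rep mRBAD3}. I expect no genuine obstacle: every required identity is one of the bimodule axioms \eqref{rep mRBA1}, \eqref{rep mRBA2}, \eqref{rep mRBAD1}, \eqref{rep mRBAD2}, \eqref{rep mRBAD3}, and the only thing to be careful about is the sign convention in $\rho$, so that the $\mathbf{l}$- and $\mathbf{r}$-contributions recombine with the correct signs into the $\rho$-form of each axiom. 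The "hardest" step is merely the clean organization of the modified Rota-Baxter compatibility check, but it reduces verbatim to \eqref{rep mRBA1}–\eqref{rep mRBA2}.
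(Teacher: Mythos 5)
Your proposal is correct and follows essentially the same route as the paper's proof: a direct verification of each axiom, with the Lie-representation property coming from the three bimodule identities, the modified Rota-Baxter compatibility from \eqref{rep mRBA1}--\eqref{rep mRBA2}, the derivation compatibility from \eqref{rep mRBAD1} minus \eqref{rep mRBAD2}, and the commutation $\mathrm{R_M\circ d_M=d_M\circ R_M}$ taken verbatim from \eqref{rep mRBAD3}. No gaps; only the routine sign bookkeeping you already flagged remains to be written out.
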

\begin{proof}
	Let $\mathrm{a,b\in A}$ and $\mathrm{m\in M}$.
	\begin{align*}
		\mathrm{\rho(a)\rho(b)(m)-\rho(b)\rho(a)(m)}=&\mathrm{\rho(a)(\mathbf{l}(b,m)-\mathbf{r}(m,b))-\rho(b)(\mathbf{l}(a,m)-\mathbf{r}(m,a))}\\
		=&\mathrm{\mathbf{l}(a,\mathbf{l}(b,m))-\mathbf{r}(\mathbf{l}(b,m),a)-\mathbf{l}(a,\mathbf{r}(m,b))+\mathbf{r}(\mathbf{r}(m,b),a)}\\
		&\mathrm{-\mathbf{l}(b,\mathbf{l}(a,m))+\mathbf{r}(\mathbf{l}(a,m),b)+\mathbf{l}(b,\mathbf{r}(m,a))-\mathbf{r}(\mathbf{r}(m,a),b)}\\
		=&\mathrm{\rho([a,b]_C)(m)-\mathbf{l}(b,\mathbf{r}(m,a))-\mathbf{l}(a,\mathbf{r}(m,b))+\mathbf{l}(a,\mathbf{r}(m,b))+\mathbf{l}(b,\mathbf{r}(m,a))}\\
		=&\mathrm{\rho([a,b]_C)(m)}.
	\end{align*}
And
\begin{align*}
	\mathrm{d_M(\rho(a)(m))}&=\mathrm{d_M(\mathbf{l}(a,m)-\mathbf{r}(m,a))}\\
	&=\mathrm{\mathbf{l}(d(a),m)+\mathbf{l}(a,d_M(m))-\mathbf{r}(d_M(m),a)-\mathbf{r}(m,d(a))}\\
	&=\mathrm{\rho(d(a))(m)+\rho(a)d_M(m)}.
\end{align*}
And
\begin{align*}
	\mathrm{\rho(R(a),R_M(m))}=&\mathrm{\mathbf{l}(R(a),R_M(m))-\mathbf{r}(R_M(m),R(a))}\\
	=&\mathrm{R_M(\mathbf{l}(R(a),m)+\mathbf{l}(a,R_M(m)))+\kappa \mathbf{l}(a,m)}\\
	&-\mathrm{R_M(\mathbf{r}(R_M(m),a)+\mathbf{r}(m,R(a)))-\kappa \mathbf{r}(m,a)}\\
	=&\mathrm{R_M(\mathbf{l}(a,R_M(m))-\mathbf{r}(R_M(m),a)+\mathbf{l}(R(a),m)-\mathbf{r}(m,R(a)))}\\
	&+\mathrm{\kappa(\mathbf{l}(a,m)-\mathbf{r}(m,a))}\\
	=&\mathrm{R_M(\rho(a)(R_M(m))+\rho(R(a))(m))+\kappa \rho(a)(m)}.
\end{align*}
And since the equation \eqref{rep mRBAD3} holds because  $(\mathrm{\mathcal{M},R_M,d_M})$ is a bimodule over the modified Rota-Baxter AssDer pair $\mathrm{(\mathcal{A},R,d)}$ then we complete the proof.
\end{proof}
\begin{rmk}
	If $(\mathrm{\mathcal{A},R,d})$ is a modified Rota-Baxter AssDer pair and $(\mathrm{\mathcal{M},R_M,d_M})$ is a bimodule on it. Then $\mathrm{(\mathcal{A}_C)_R=(\mathcal{A}_R)_C}$ as a Lie algebra. Moreover, the representation of $\mathrm{(\mathcal{A}_C)_R}$ on $\mathrm{\widehat{\mathcal{M}}}$ and the representation of $\mathrm{(\mathcal{A}_R)_C}$ on $\mathrm{\widetilde{\mathcal{M}}}$ coincide.
\end{rmk}
There is a morphism from the Hochschild cochain
complex of an associative algebra $\mathrm{(A,\mu)}$ to the Chevalley-Eilenberg cochain complex of the corresponding skewsymmetrized Lie algebra $(\mathrm{A,[-,-]_C})$. Then the following diagrams commute
\begin{center}
$\xymatrix{
    \mathrm{C^n(A;M)}\ar[r]^{\mathrm{\delta_{Hoch}^n}} \ar[d]^{\mathrm{S_n}}& \mathrm{C^{n+1}(A;M)} \ar@{}[r] \ar[d]^{\mathrm{S_{n+1}}}
    &\mathrm{C^{n}(A_R;\widetilde{M})}\ar[r]^{\mathrm{\delta^n_\mathrm{mHoch}}} \ar[d]^{\mathrm{S_{n}}}&\mathrm{C^{n+1}(A_R;\widetilde{M})}\ar[d]^{\mathrm{S_{n+1}}} \ar@{}[r]& \\
    \mathrm{C^n(A_C;M)}\ar[r]^{\mathrm{\delta_{CE}^n}} & \mathrm{C^{n+1}(A_C;M)} \ar@{}[r]&\mathrm{C^{n}
    ((A_R)_C;\widetilde{M})}\ar[r]^{\mathrm{\delta^n_\mathrm{mRBO^\kappa}}}&\mathrm{C^{n+1}((A_R)_C;\widetilde{M})} \ar@{}[r]&
  }$
\end{center}
Here $\mathrm{\mathrm{S_\star}}$ are skew-symmetrization maps. Then we have the following
\begin{thm}
	Let $(\mathrm{\mathcal{M},R_M})$ be a bimodule of the modified Rota-Baxter associative algebra $(\mathrm{\mathcal{A}},R)$. Then the collection of maps
	$\mathrm{\mathcal{S}_n:C^n_{mRBAA^\kappa}(A;M)\rightarrow  C^n_{mRBLA^\kappa}(A_C;M)}$, $\mathrm{n\geq1}$ defined by
	\begin{equation*}
		\mathrm{\mathcal{S}_n=(S_n,S_{n-1})}
	\end{equation*}
gives rise to a morphism from the cohomology of $\mathrm{(\mathcal{A},R)}$ with coefficients in $\mathrm{(\mathcal{M},R_M)}$ to the cohomology of $\mathrm{(\mathcal{A}_C,R)}$ with coefficients in the representation $\mathrm{(M,\rho,R_M)}$.
\end{thm}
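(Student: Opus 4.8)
The plan is to verify that $\mathcal{S}_\star = (S_\star, S_{\star-1})$ intertwines the differentials $\partial^\star_{\mathrm{mRBAA}^\kappa}$ and $\partial^\star_{\mathrm{mRBLA}^\kappa}$, and then pass to cohomology. Recall that on the associative side the differential acts by
$\partial^n_{\mathrm{mRBAA}^\kappa}(f_n,g_{n-1}) = \big(\delta^n_{\mathrm{Hoch}}(f_n),\, -\delta^{n-1}_{\mathrm{mHoch}}(g_{n-1}) - \phi^n(f_n)\big)$,
and the Lie version has exactly the same shape with $\delta_{\mathrm{Hoch}}$ replaced by $\delta_{\mathrm{CE}}$, $\delta_{\mathrm{mHoch}}$ by $\delta_{\mathrm{mRBO}^\kappa}$, and $\phi^n$ by its Lie analogue (call it $\phi^n_C$). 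So the claim reduces to three compatibilities: $S_{n+1}\circ\delta^n_{\mathrm{Hoch}} = \delta^n_{\mathrm{CE}}\circ S_n$; $S_n\circ\delta^{n-1}_{\mathrm{mHoch}} = \delta^{n-1}_{\mathrm{mRBO}^\kappa}\circ S_{n-1}$; and $S_n\circ\phi^n = \phi^n_C\circ S_n$. The first two are precisely the commutativity of the two squares displayed just before the theorem — these are the classical skew-symmetrization morphisms from Hochschild to Chevalley–Eilenberg cohomology (for $\mathcal{A}$ and for the induced algebra $\mathcal{A}_R$ with its bimodule $\widetilde{\mathcal M}$), so I would simply cite them. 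The third, the compatibility with the "$\phi$" terms that encode the modified Rota-Baxter operator, is the one genuinely new point.

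First I would assemble the pieces: combine the two cited commuting squares to get that $(S_n)$ is a chain map $C^\star_{\mathrm{mRBAA}^\kappa}(A;M)\to$ the corresponding complex built on $(\mathcal{A}_C,R)$ at the level of the two summands, modulo checking the $\phi$-compatibility. Then I would prove $S_n\circ\phi^n = \phi^n_C\circ S_n$ directly from the explicit formula for $\phi^n$: $\phi^n(f_n)$ is a signed sum of terms of the form $(R_M\circ f_n)(R(a_1),\dots,a_{i_1},\dots,a_{i_r},\dots,R(a_n))$ with coefficients $(-\kappa)^{(r-1)/2}$ or $(-\kappa)^{r/2+1}$, plus the leading term $f_n(Ra_1,\dots,Ra_n)$. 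Skew-symmetrizing commutes with precomposition by $R^{\otimes n}$ (since $R$ is a fixed linear map applied slot-wise) and with postcomposition by $R_M$, and it sends the index-subset structure to itself because the combinatorial coefficients depend only on the cardinality $r$, not on the particular positions. Hence $S_n$ carries each summand of $\phi^n$ to the corresponding summand of $\phi^n_C$; this is a bookkeeping argument with permutation signs rather than a substantive computation. I expect this to be the main obstacle only in the sense of notational care — one must check that the skew-symmetrization sign conventions on $C^n_{\mathrm{mRBAA}^\kappa}(A;M) = C^n(A;M)\oplus C^{n-1}(A;M)$ match on both graded pieces, and that the "odd $r$" / "even $r$" dichotomy is preserved under permutations (it is, trivially).

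Finally, having established $\partial^\star_{\mathrm{mRBLA}^\kappa}\circ\mathcal{S}_\star = \mathcal{S}_\star\circ\partial^\star_{\mathrm{mRBAA}^\kappa}$, I would invoke the standard fact that a cochain map induces a map on cohomology: $\mathcal{S}_\star$ sends cocycles to cocycles and coboundaries to coboundaries, hence descends to a well-defined morphism
$\mathcal{H}^n_{\mathrm{mRBAA}^\kappa}(A;M)\to \mathcal{H}^n_{\mathrm{mRBLA}^\kappa}(A_C;M)$
for all $n\geq 1$, with coefficients in the representation $(M,\rho,R_M)$ constructed in the preceding proposition. The base case $n=1$ (where both complexes start with $\mathrm{Hom}(A,M)$ and $S_1=\mathrm{id}$) is immediate. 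This completes the argument.
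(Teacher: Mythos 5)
Your proposal is correct and follows essentially the same route as the paper: reduce the claim to showing that $\mathcal{S}_\star=(S_\star,S_{\star-1})$ is a cochain map, invoke the two displayed commuting squares for the Hochschild/Chevalley--Eilenberg pieces on $\mathcal{A}$ and on $\mathcal{A}_R$, and then pass to cohomology. The only difference is that you explicitly justify the compatibility $S_n\circ\phi^n=\phi^n_C\circ S_n$ by the subset-reindexing argument (the coefficients depend only on the cardinality $r$), a step the paper's chain of equalities uses silently; this is added care rather than a different approach.
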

\begin{proof}
	We only need to prove that the set of maps $\mathrm{\{\mathcal{S}_n\}_{n\geq1}}$ commute with corresponding coboundary maps. Let $\mathrm{(f_n,f_{n-1})\in C^n_{mRBAA^\kappa}(A;M)}$,
	\begin{align*}
		\mathrm{(\partial^n_{mRBLA^\kappa}\circ\mathcal{S}_n)(f_n,f_{n-1})}&=\mathrm{\partial^n_{mRBAA^\kappa}(S_n(f_n),S_{n-1}(f_{n-1}))}\\
		&=\mathrm{(\delta^n_{CE}\circ S_n(f_n),-\delta_{mRBO^\kappa}^{n-1}\circ S_{n-1}(f_{n-1})-\phi^n\circ S_n(f_n))}\\
		&=(\mathrm{S_{n+1}\circ \delta_{Hoch}^n(f_n),-S_n\circ \delta_{mHoch}^{n-1}(f_{n-1})-S_n\circ \phi^n(f_n)})\\
		&=\mathrm{\mathcal{S}_n(\delta_{Hoch}^n(f_n),-\delta_{mHoch}^{n-1}(f_{n-1})-\phi^n(f_n))}\\
		&=\mathrm{(\mathcal{S}_{n+1}\circ \partial^n_{mRBAA^\kappa})(f_n,f_{n-1})}
	\end{align*}
\end{proof}
\begin{thm}
	Let $\mathrm{(\mathcal{A},R,d)}$ be a modified Rota-Baxter AssDer pair and $\mathrm{(\mathcal{M},R_M,d_M)}$ be a bimodule of it. Then the collection of maps $\mathrm{\mathfrak{S}_n:\mathfrak{C}^n_{mRBAD^\kappa}(A;M)\rightarrow  \mathfrak{C}^n_{mRBLD^\kappa}(A_C;M)}$ $\mathrm{n\geq1}$ defined by
	\begin{equation*}
		\mathrm{\mathfrak{S}_n:=(\mathcal{S}_n,\mathcal{S}_{n-1})}
	\end{equation*}
induces a morphism from the cohomology of $\mathrm{(\mathcal{A},R,d)}$ with coefficients in $\mathrm{(\mathcal{M},R_M,d_M)}$ to the cohomology of $\mathrm{(\mathcal{A}_C,R,d)}$ with coefficients in the representation $\mathrm{(M,\rho,R_M,d_M)}$.
\end{thm}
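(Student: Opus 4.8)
The plan is to verify that $\mathfrak{S}_\star$ is a morphism of cochain complexes, i.e. that $\mathfrak{D}^n_{\mathrm{mRBLD}^\kappa}\circ\mathfrak{S}_n=\mathfrak{S}_{n+1}\circ\mathfrak{D}^n_{\mathrm{mRBAD}^\kappa}$ for all $n\geq 1$; the induced map on cohomology groups then follows formally. Writing out the right-hand side of \eqref{mRBLD coboundary} for a chain $((f_n,g_{n-1}),(h_{n-1},s_{n-2}))$, one sees that $\mathfrak{D}^n_{\mathrm{mRBAD}^\kappa}$ is assembled from exactly two operators, $\partial^\star_{\mathrm{mRBAA}^\kappa}$ and $\Delta^\star$ (the latter carrying the sign $(-1)^n$), and likewise on the LieDer side. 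Since both $\mathfrak{S}_n$ and $\mathfrak{D}^n_{\mathrm{mRBAD}^\kappa}$ act componentwise through $\mathcal{S}_\star$, $\partial^\star_{\mathrm{mRBAA}^\kappa}$ and $\Delta^\star$, expanding the two composites reduces the desired identity to two sub-identities: first, $\partial^\star_{\mathrm{mRBLA}^\kappa}\circ\mathcal{S}_\star=\mathcal{S}_{\star+1}\circ\partial^\star_{\mathrm{mRBAA}^\kappa}$, which is precisely the conclusion of the preceding theorem (used both in degree $n$ and in degree $n-1$); and second, $\Delta^\star\circ\mathcal{S}_\star=\mathcal{S}_\star\circ\Delta^\star$ at the level of $\mathrm{mRBAA}^\kappa$-cochains, i.e. the skew-symmetrization maps intertwine the two copies of the derivation operator $\Delta^\star$. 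Note that $\Delta^\star$ on the LieDer side is built from the same derivation $d$ on $A$ and $d_M$ on $M$ as on the AssDer side, so this last statement is meaningful.

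For the second sub-identity I would argue as follows. Both $\mathcal{S}_n$ and $\Delta^n$ act diagonally on $C^n_{\mathrm{mRBAA}^\kappa}(A;M)=C^n(A;M)\oplus C^{n-1}(A;M)$, through $(S_n,S_{n-1})$ and $(\Delta^n,\Delta^{n-1})$ respectively, so it suffices to prove $S_n\circ\Delta^n=\Delta^n\circ S_n$ for Hochschild-type cochains of $(A,\mu)$, together with the identical statement for cochains of $(A_R,\mu_R)$. This is elementary: by definition $\Delta^n(f_n)=\sum_{i=1}^{n}f_n\circ(\mathrm{Id}_A\otimes\cdots\otimes d\otimes\cdots\otimes\mathrm{Id}_A)-d_M\circ f_n$ involves only the insertion of $d$ into a single tensor slot and post-composition with $d_M$; evaluating on a decomposable tensor $a_1\otimes\cdots\otimes a_n$ and comparing the two alternating sums term by term, one checks that both operations commute with the antisymmetrizer $S_n$, since insertion of $d$ is degree-preserving in each slot and $d_M$ is applied after the sum over permutations.

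Assembling the two sub-identities into the componentwise expansion of $\mathfrak{D}^n_{\mathrm{mRBLD}^\kappa}\circ\mathfrak{S}_n$ and $\mathfrak{S}_{n+1}\circ\mathfrak{D}^n_{\mathrm{mRBAD}^\kappa}$ then yields equality of both components, with the low-degree case $\mathfrak{D}^1_{\mathrm{mRBAD}^\kappa}(f_1)=(\partial^1_{\mathrm{mRBAA}^\kappa}(f_1),-\Delta^1(f_1))$ treated in the same fashion. I expect the main obstacle to be organizational rather than conceptual: the double-layered pair structure $\mathfrak{C}^n_{\mathrm{mRBAD}^\kappa}=C^n_{\mathrm{mRBAA}^\kappa}\times C^{n-1}_{\mathrm{mRBAA}^\kappa}$ nested inside $C^n_{\mathrm{mRBAA}^\kappa}=C^n\oplus C^{n-1}$, together with the signs $(-1)^n$, makes the bookkeeping delicate, and one must be careful to invoke the preceding theorem in the correct degrees and to use the correct copy of $\Delta^\star$ on each summand.
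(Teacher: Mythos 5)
Your proposal is correct and follows essentially the same route as the paper: expand both composites componentwise, invoke the preceding theorem $\partial^\star_{\mathrm{mRBLA}^\kappa}\circ\mathcal{S}_\star=\mathcal{S}_{\star+1}\circ\partial^\star_{\mathrm{mRBAA}^\kappa}$ in degrees $n$ and $n-1$, and use the commutation $\Delta^\star\circ\mathcal{S}_\star=\mathcal{S}_\star\circ\Delta^\star$. The only difference is that you actually justify this last commutation (via the slot-insertion argument for $d$ and post-composition with $d_M$), whereas the paper uses it silently; this is a welcome addition rather than a divergence.
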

\begin{proof}
	Let $\mathrm{(F_n,G_{n-1})\in C^n_{mRBAD^\kappa}(A;M)}$ then we have
	\begin{align*}
		\mathrm{(\mathfrak{D}^n_{mRBLD^\kappa}\circ \mathfrak{S}_n)(F_n,G_{n-1})}&=\mathrm{\mathfrak{D}^n_{mRBLD^\kappa}(\mathcal{S}_n(F_n),\mathcal{S}_{n-1}(G_{n-1}))}\\
		&=\mathrm{(\partial^n_{mRBLA^\kappa}\circ \mathcal{S}_n(F_n),\partial^{n-1}_{mRBLA^\kappa}\circ \mathcal{S}_{n-1}(G_{n-1})+(-1)^n\Delta^n\circ\mathcal{S}_n(F_n))}\\
		&=\mathrm{(\mathcal{S}_{n+1}\circ\partial^n_{mRBAA^\kappa}(F_n),\mathcal{S}_n\circ\partial^{n-1}_{mRBAA^\kappa}(G_{n-1})+(-1)^n\mathcal{S}_n\circ\Delta^n(F_n))}\\
		&=\mathrm{(\mathfrak{S}_{n+1}\circ\mathfrak{D}^n_{mRBAD^\kappa})(F_n,G_{n-1})}.
	\end{align*}
This complete the proof.
\end{proof}
\section{Deformations of modified Rota-Baxter AssDer pairs}\label{sec-4}
\def\theequation{\arabic{section}.\arabic{equation}}
\setcounter{equation} {0}
In this section, we study formal one-parameter deformations of modified Rota-Baxter AssDer pairs.
In particular, we show that the infinitesimal of a formal one-parameter deformation of a modified Rota-Baxter AssDer pair $\mathrm{(\mathcal{A},R,d)}$ is a $\mathrm{2}$-cocycle in the cohomology complex of $\mathrm{(\mathcal{A},R,d)}$ with
coefficients in itself.
Further, the vanishing of the second cohomology implies the rigidity of the modified Rota-Baxter AssDer pair.

Let $\mathrm{(\mathcal{A},R,d)}$ be a modified Rota-Baxter AssDer pair. Let $\mu$ denote the associative
multiplication on $\mathrm{A}$. Consider the space
$\mathrm{A[\![t]\!]}$ of formal power series in $\mathrm{t}$ with coefficients from $\mathrm{A}$. Then
$\mathrm{A[\![t]\!]}$ is a $\mathrm{\mathbf{k}[\![t]\!]}$-module.

\begin{defi}
A {\bf formal one-parameter deformation} of the modified Rota-Baxter AssDer pair $\mathrm{(\mathcal{A}=(A,\mu),R,d)}$
consists of three formal power series of the form
\begin{align*}
&\mathrm{\mu_t} =\mathrm{ \sum_{i=0}^\infty \mu_i t^i, \text{ where } \mu_i \in \mathrm{Hom}
(A^{\otimes 2}, A) \text{ with }
\mu_0 = \mu,}\\
&\mathrm{R_t} = \mathrm{\sum_{i=0}^\infty R_i t^i, \text{ where } R_i \in \mathrm{Hom} (A, A) \text{ with } R_0 = R},\\
&\mathrm{d_t} = \mathrm{\sum_{i=0}^\infty d_i t^i, \text{ where } d_i \in \mathrm{Hom} (A, A) \text{ with } d_0 = d},
\end{align*}
such that the $\mathrm{\mathbf{k}[\![t]\!]}$-module $\mathrm{A[\![t]\!]}$ is an associative algebra with the
$\mathrm{\mathbf{k}[\![t]\!]}$-bilinear
multiplication $\mathrm{\mu_t}$, the $\mathrm{\mathbf{k}[\![t]\!]}$-linear map
$\mathrm{R_t : A[\![t]\!] \rightarrow A[\![t]\!]}$ is a modified
Rota-Baxter operator of weight $\kappa$ and the $\mathrm{\mathbf{k}[\![t]\!]}$-linear map
$\mathrm{d_t : A[\![t]\!] \rightarrow A[\![t]\!]}$ is a derivation. In other words,
$\mathrm{\big( A[\![t]\!] = (A[\![t]\!], \mu_t), R_t,d_t \big)}$ is a
modified Rota-Baxter AssDer pair.

We often denote a formal one-parameter deformation as above by the triple $\mathrm{(\mu_t, R_t,d_t)}$.
\end{defi}

It follows that $\mathrm{(\mu_t, R_t,d_t)}$ is a formal one-parameter deformation of the modified Rota-Baxter AssDer pair
$\mathrm{(\mathcal{A}, R,d)}$ if and only if the followings equations hold:
\begin{align*}
\mathrm{\mu_t \big( \mu_t (a, b), c \big)} =~&\mathrm{ \mu_t \big( a, \mu_t(b,c)  \big)}, \\
\mathrm{\mu_t \big(  R_t (a), R_t (b) \big)} =~& \mathrm{R_t \big(  \mu_t (R_t (a), b) + \mu_t (a, R_t(b)) \big)
+ \kappa ~ \mu_t (a, b)},\\
\mathrm{ d_t \big(\mu_t(a,b)\big)} =~& \mathrm{ \mu_t (d_t (a), b) + \mu_t (a, d_t(b)) },\\
\mathrm{R_t \circ d_t(a)+R_t \circ d_t(a)}=~&\mathrm{d_t \circ R_t(a)+d_t\circ R_t(a)},
\end{align*}
for all $\mathrm{a,b,c \in A}$. By expanding these equations and comparing the coefficients of $\mathrm{t^n}$
(for $\mathrm{n \geq 0}$) in both sides, we obtain
\begin{align*}
\mathrm{\sum_{i+j=n} \mu_i \big( \mu_j (a, b), c  \big)} =~& \mathrm{\sum_{i+j = n} \mu_i \big(  a, \mu_j (b,c) \big)}, \\
\mathrm{\sum_{i+j+k = n} \mu_i \big( R_j (a), R_j (b)   \big)} =~& \mathrm{\sum_{i+j+k = n} R_i \big( \mu_j (R_k (a), b) ~+~ \mu_j
(a, R_k (b))  \big) + \kappa ~ \mu_n (a, b)},\\
\mathrm{\sum_{i+j = n} d_i \big( \mu_j (a,b) \big)} =~& \mathrm{\sum_{i+j = n} \mu_i (d_j (a), b) ~+~ \mu_i
(a, d_j (b))  \big) },\\
\mathrm{\sum _{\substack{\mathrm{i}+\mathrm{j}=n \\\mathrm{i},\mathrm{j}\geq 0}}R_\mathrm{i}\circ d_\mathrm{j}(a)}
=~&\mathrm{\sum _{\substack{\mathrm{i}+\mathrm{j}=n \\\mathrm{i},\mathrm{j}\geq 0}}d_\mathrm{i}\circ R_\mathrm{j}(a)},
\end{align*}
for $\mathrm{n \geq 0}$. Those equations hold for $\mathrm{n = 0}$ as $\mu$ is the associative multiplication
on $\mathrm{A}$,
the linear map $\mathrm{R: A \rightarrow A}$ is a modified Rota-Baxter algebra of weight $\kappa$
and the linear map $\mathrm{d: A \rightarrow A}$ is a derivation. However, for $\mathrm{n = 1}$, we get
\begin{align}
\mathrm{\mu_1 (a \cdot b, c) + \mu_1 (a,b) \cdot c} =~& \mathrm{\mu_1 (a, b \cdot c) + a \cdot \mu_1 (b, c)}, \label{inf-1-4}\\
\mathrm{R_1 (a) \cdot R(b) + R(a) \cdot R_1(b) + \mu_1 (R(a) , R(b))} =~& \mathrm{R_1 \big( R(a) \cdot b + a \cdot R(b)  \big)
+ R \big( \mu_1 (R(a), b) ~+~ \mu_1 (a, R(b))  \big)} \nonumber \\
~&\mathrm{+ R \big(  R_1(a) \cdot b + a \cdot R_1(b) \big) + \kappa ~ \mu_n (a, b)}, \label{inf-2-4}\\
\mathrm{d_1(\mu(a,b))+d(\mu_1(a,b))}=~&\mathrm{\mu_1(d(a),b)+\mu(d_1(a),b)+\mu_1(a,d(b))+\mu(a,d_1(b))},\label{inf-3-4}\\
\mathrm{R_1\circ d+R\circ d_1}=~&\mathrm{d_1\circ R+d\circ R_1}.\label{inf-4-4}
\end{align}
for all $\mathrm{a, b, c \in A}$.\\
Note that the equation (\ref{inf-1-4}) is equivalent to
 \begin{center}
 $\mathrm{(\delta_\mathrm{Hoch}^2 (\mu_1)) (a, b, c) = 0},$
 \end{center}
the equation (\ref{inf-2-4}) is equivalent to
\begin{center}
$\mathrm{-\delta_\mathrm{mHoch^\kappa}^1\mathrm{(R_1)(a,b)}-\phi^2(\mu_1)(\mathrm{a,b})=0},$
\end{center}
the equation (\ref{inf-3-4}) is equivalent to
\begin{center}
$\mathrm{\delta_\mathrm{Hoch}^1\mathrm{(d_1)(a,b)}+\Delta^2(\mu_1)(\mathrm{a,b})=0},$
\end{center}
while the equation (\ref{inf-4-4}) is equivalent to
\begin{center}
$\mathrm{\Delta^1(\mathrm{R_1(a)})-\phi^1(\mathrm{d_1(a)})=0}.$
\end{center}
Thus, we have,
$\mathrm{\Big((\delta_\mathrm{Hoch}^2(\mu_1),-\delta_\mathrm{mHoch^\kappa}^1\mathrm{(R_1)}-\phi^2\mu_1),
(\delta_\mathrm{Hoch}^1\mathrm{(d_1)}+\Delta^2\mu_1,\Delta^1\mathrm{R_1}-\phi^1\mathrm{d_1})\Big)=((0,0),(0,0))}$.\\
Which is equivalent to $\mathrm{(\partial_{mRBAA^\kappa}^2(\mu_1,R_1),\partial_{mRBAA^\kappa}^1(d_1)+\Delta^2(\mu_1,R_1))=0}$.\\
Hence, $\mathrm{\mathfrak{D}^2_\mathrm{mRBAD^\kappa}(\mu_1,R_1,d_1)=0}$.\\
This shows $\mathrm{(\mu_1,\mathrm{R_1,d_1})}$ is a $\mathrm{2}$-cocycle in the cochain complex
$\mathrm{\Big(\mathfrak{C}^\star_{mRBAD^\kappa}(A,A),\mathfrak{D}^\star_\mathrm{mRBAD^\kappa}\Big)}.$
Thus, from the above discussion, we have the following theorem.
 \begin{thm}\label{infy-co}
Let $\mathrm{(\mu_\mathrm{t}, \mathrm{R_t},\mathrm{d_t})}$ be a one-parameter formal deformation of a modified
Rota-Baxter AssDer pair $\mathrm{(\mathcal{A},R,d)}$. Then $\mathrm{(\mu_1,R_1,d_1)}$ is a
$\mathrm{2}$-cocycle in the cochain complex $\mathrm{\Big(\mathfrak{C}^\star_{mRBAD^\kappa}(A,A),
\mathfrak{D}^\star_\mathrm{mRBAD^\kappa}\Big)}.$
\end{thm}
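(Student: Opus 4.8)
The plan is to reduce the claim entirely to the already-established structural facts about the cochain complex $\mathrm{\big(\mathfrak{C}^\star_{mRBAD^\kappa}(A,A),\mathfrak{D}^\star_\mathrm{mRBAD^\kappa}\big)}$ together with a careful bookkeeping of the degree-one terms in the four deformation equations. Concretely, I would start from the formal deformation $\mathrm{(\mu_t,R_t,d_t)}$ and write out the four defining identities (associativity of $\mathrm{\mu_t}$, the modified Rota-Baxter identity for $\mathrm{R_t}$, the derivation identity for $\mathrm{d_t}$, and the commutation $\mathrm{R_t\circ d_t = d_t\circ R_t}$) as power series in $\mathrm{t}$, then extract the coefficient of $\mathrm{t^1}$. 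This yields exactly the four equations \eqref{inf-1-4}, \eqref{inf-2-4}, \eqref{inf-3-4}, \eqref{inf-4-4}. This first step is purely mechanical expansion; the only care needed is to keep $\mu_0=\mu$, $R_0=R$, $d_0=d$ fixed and to collect all the mixed terms correctly.

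The second step is to recognize each of these four degree-one identities as the vanishing of a specific component of a coboundary. I would match \eqref{inf-1-4} with $\mathrm{\delta^2_{Hoch}(\mu_1)=0}$, \eqref{inf-2-4} with $\mathrm{-\delta^1_{mHoch^\kappa}(R_1)-\phi^2(\mu_1)=0}$, \eqref{inf-3-4} with $\mathrm{\delta^1_{Hoch}(d_1)+\Delta^2(\mu_1)=0}$, and \eqref{inf-4-4} with $\mathrm{\Delta^1(R_1)-\phi^1(d_1)=0}$, in each case simply by unwinding the definitions of $\mathrm{\delta^\star_{Hoch}}$, $\mathrm{\delta^\star_{mHoch}}$, $\mathrm{\phi^\star}$ and $\mathrm{\Delta^\star}$ given in Section 3 and reading off that the formula reproduces the left-hand side of the corresponding deformation equation. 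The first two components assemble, by the definition of $\mathrm{\partial^\star_{mRBAA^\kappa}}$, into the statement $\mathrm{\partial^2_{mRBAA^\kappa}(\mu_1,R_1)=0}$, and the last two assemble (using the definition of $\mathfrak{D}^\star_{\mathrm{mRBAD^\kappa}}$ in \eqref{mRBLD coboundary} with the sign $(-1)^2=1$ in degree two) into $\mathrm{\partial^1_{mRBAA^\kappa}(d_1)+\Delta^2(\mu_1,R_1)=0}$. Putting both together gives precisely $\mathrm{\mathfrak{D}^2_{\mathrm{mRBAD^\kappa}}(\mu_1,R_1,d_1)=0}$, i.e. $\mathrm{(\mu_1,R_1,d_1)}$ is a $2$-cocycle, which is the assertion of Theorem~\ref{infy-co}.

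The step I expect to require the most vigilance is not a conceptual obstacle but a sign- and index-matching issue: verifying that the $\mathrm{\phi^n}$ contribution coming from the $\mathrm{t^1}$-coefficient of the modified Rota-Baxter identity for $\mathrm{R_t}$ agrees with the combinatorial sum over odd/even subsets $\mathrm{1\le i_1<\cdots<i_r\le n}$ with weights $\mathrm{(-\kappa)^{(r-1)/2}}$ and $\mathrm{(-\kappa)^{r/2+1}}$ appearing in the definition of $\mathrm{\phi^n}$. In degree $n=2$ this sum is short — only $r=1$ and $r=2$ occur — so the check is finite, but one must be careful that the $\kappa$-dependent terms in \eqref{inf-2-4} (note the $\mathrm{\kappa\,\mu_n(a,b)}$ term, which at $n=1$ reads $\mathrm{\kappa\,\mu_1(a,b)}$) land on the correct side and with the correct coefficient. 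Once that identification is confirmed, the remainder is just invoking the definitions of the coboundary operators and the already-proven fact (the preceding theorem) that $\mathfrak{D}^\star_{\mathrm{mRBAD^\kappa}}$ is a genuine differential, so that "$2$-cocycle" is a meaningful notion. I would then conclude by assembling the four matched identities into the single equation $\mathrm{\mathfrak{D}^2_{\mathrm{mRBAD^\kappa}}(\mu_1,R_1,d_1)=0}$ and stating that this is exactly the content of the theorem.
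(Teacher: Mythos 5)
Your proposal is correct and follows essentially the same route as the paper: expand the four defining identities of the deformation in powers of $\mathrm{t}$, extract the coefficient of $\mathrm{t^1}$ to obtain the four degree-one equations, identify each with the vanishing of the corresponding component of $\mathrm{\partial^\star_{mRBAA^\kappa}}$, $\mathrm{\Delta^\star}$ and $\mathrm{\phi^\star}$, and assemble them into $\mathrm{\mathfrak{D}^2_{mRBAD^\kappa}(\mu_1,R_1,d_1)=0}$. The sign- and $\kappa$-bookkeeping you flag as the delicate point is indeed the only place requiring care, and the paper handles it in exactly the way you describe.
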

\begin{defi}
The $\mathrm{2}$-cocycle $\mathrm{(\mu_1,R_1,d_1)}$ is called the {\bf infinitesimal} formal one-parameter deformation $(\mathrm{\mu_t,R_t,d_t)}$ of the modified Rota-Baxter AssDer pair $\mathrm{(\mathcal{A},R,d)}$.
\end{defi}

\begin{defi}
Let $\mathrm{(\mu_t, R_t,d_t)}$ and $\mathrm{(\mu_t', R_t',d_t')}$ be two formal one-parameter deformations of the
modified Rota-Baxter AssDer pair
$\mathrm{(\mathcal{A}, R,d)}$. These two deformations are called {\bf equivalent} if there exists a formal isomorphism
\begin{align*}
\mathrm{\varphi_t = \sum_{i=0}^\infty \varphi_i t^i : \big( A[\![t]\!] = (A[\![t]\!], \mu_t), R_t,d_t  \big)
\rightarrow  \big( A[\![t]\!]' = (A[\![t]\!], \mu_t'), R_t',d_t'  \big) ~~ \text{ with } \varphi_0 = Id}
\end{align*}
between modified Rota-Baxter AssDer pair, i.e.
\begin{align}\label{equiv-dend}
\mathrm{\varphi_t \big(  \mu_t (a, b) \big) = \mu_t' \big(\varphi_t (a), \varphi_t(b) \big), ~~~~ \varphi_t \circ R_t = R_t' \circ \varphi_t,
\text{ and } ~~~~  \varphi_t \circ d_t = d_t' \circ \varphi_t, ~ \text{ for } a, b \in A}.
\end{align}
\end{defi}

By expanding the previous equations in (\ref{equiv-dend}) and comparing the coefficients of $\mathrm{t^n}$
(for $\mathrm {n \geq 0}$) in both sides,
we obtain
\begin{align*}
\mathrm{\sum_{i+j=n} \varphi_i \big(  \mu_j (a, b) \big)} =~& \mathrm{\sum_{i+j +k = n} \mu_i' \big(  \varphi_j (a),
\varphi_k (b) \big)},\\
\mathrm{\sum_{i+j = n} \varphi_i \circ R_j} =~& \mathrm{\sum_{i+j = n} R_i' \circ \varphi_j},\\
\mathrm{\sum_{i+j = n} \varphi_i \circ d_j} =~& \mathrm{\sum_{i+j = n} d_i' \circ \varphi_j},
\end{align*}
for $\mathrm{n \geq 0}$. The equations are hold for $\mathrm{n=0}$ as $\mathrm{\varphi_0 = Id}$. However,
for $\mathrm{n=1}$, we get
\begin{align}
\mathrm{\mu_1 (a, b) + \varphi_1 (a \cdot b)} =~& \mathrm{\mu_1' (a, b) + \varphi_1 (a) \cdot b + a \cdot \varphi_1 (b)},
\label{mor-equiv-1}\\
\mathrm{R_1 + \varphi_1 \circ R} =~& \mathrm{R_1' +  R \circ \varphi_1,}  \label{mor-equiv-2}\\
\mathrm{d_1 + \varphi_1 \circ d} =~& \mathrm{d_1' +  d \circ \varphi_1,}  \label{mor-equiv-3}
\end{align}
Therefore, we have
\[\mathrm{(\mu_1^\prime,R_1^\prime,d_1^\prime)-(\mu_1,R_1,d_1)
=(\delta_{\mathrm{Hoch}}^1(\psi_1),-\phi^1(\psi_1),-\Delta^1(\psi_1))=\mathfrak{D}^1_\mathrm{mRBAD^\kappa}(\psi_1)
\in \mathfrak{C}^{1}_\mathrm{mRBAD^\kappa}(A,A)}.\]
As a consequence of the above discussions, we obtain the following.

\begin{thm}
Let $\mathrm{(\mathcal{A}, R,d)}$ be a modified Rota-Baxter AssDer pair. Suppose $\mathrm{(\mu_t, R_t,d_t)}$ is a formal
one-parameter deformation of $\mathrm{(\mathcal{A}, R,d)}.$ Then the infinitesimal is a $\mathrm{2}$-cocycle in the cohomology
complex of $\mathrm{(\mathcal{A}, R,d)}$ with coefficients in itself. Moreover, the corresponding cohomology class depends only
on the equivalence class of the deformation.
\end{thm}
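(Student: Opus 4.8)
The plan is to treat the two assertions of the theorem separately. The first one — that the infinitesimal $\mathrm{(\mu_1,R_1,d_1)}$ is a $\mathrm{2}$-cocycle in the complex $\mathrm{\big(\mathfrak{C}^\star_{mRBAD^\kappa}(A,A),\mathfrak{D}^\star_{mRBAD^\kappa}\big)}$ — is already established: it is precisely Theorem~\ref{infy-co}, obtained by expanding the four defining identities of the deformation $\mathrm{(\mu_t,R_t,d_t)}$ and reading off the coefficient of $\mathrm{t}$. So for that part I would simply invoke Theorem~\ref{infy-co}. The real content is the second assertion, that the class $\mathrm{[(\mu_1,R_1,d_1)]\in\mathcal{H}^2_{mRBAD^\kappa}(A,A)}$ depends only on the equivalence class of the deformation.

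For this, I would start with two equivalent deformations $\mathrm{(\mu_t,R_t,d_t)}$ and $\mathrm{(\mu_t',R_t',d_t')}$ of $\mathrm{(\mathcal{A},R,d)}$ together with a formal isomorphism $\mathrm{\varphi_t=\sum_{i\geq 0}\varphi_i t^i}$, $\mathrm{\varphi_0=Id}$, satisfying the three compatibility relations \eqref{equiv-dend}. The first step is purely formal: expand each of these relations as a power series in $\mathrm{t}$ and compare the coefficient of $\mathrm{t}$ on both sides. This produces exactly the three identities \eqref{mor-equiv-1}, \eqref{mor-equiv-2}, \eqref{mor-equiv-3}, which express $\mathrm{\mu_1'-\mu_1}$, $\mathrm{R_1'-R_1}$ and $\mathrm{d_1'-d_1}$ in terms of $\mathrm{\varphi_1}$, $\mathrm{R}$, $\mathrm{d}$ and $\mathrm{\mu}$.

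The second step is to recognise these three identities as the three slots of a $\mathrm{1}$-coboundary. Setting $\mathrm{\psi_1=-\varphi_1}$, equation \eqref{mor-equiv-1} becomes $\mathrm{\mu_1'-\mu_1=\delta^1_{Hoch}(\psi_1)}$; equation \eqref{mor-equiv-2}, after rewriting $\mathrm{\varphi_1\circ R-R\circ\varphi_1}$ via $\mathrm{\phi^1}$ on the adjoint bimodule (where $\mathrm{R_M=R}$), becomes $\mathrm{R_1'-R_1=-\phi^1(\psi_1)}$; and equation \eqref{mor-equiv-3} becomes $\mathrm{d_1'-d_1=-\Delta^1(\psi_1)}$. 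Assembling these and comparing with the definition of $\mathrm{\mathfrak{D}^1_{mRBAD^\kappa}}$ gives
\begin{equation*}
\mathrm{(\mu_1',R_1',d_1')-(\mu_1,R_1,d_1)=\big(\delta^1_{Hoch}(\psi_1),\,-\phi^1(\psi_1),\,-\Delta^1(\psi_1)\big)=\mathfrak{D}^1_{mRBAD^\kappa}(\psi_1),}
\end{equation*}
so the two infinitesimals differ by a coboundary and hence represent the same class in $\mathrm{\mathcal{H}^2_{mRBAD^\kappa}(A,A)}$, which is the claim.

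I expect the only delicate point to be the sign and indexing bookkeeping in this second step: one must check that the $\mathrm{t}$-coefficient of $\mathrm{\varphi_t(\mu_t(a,b))=\mu_t'(\varphi_t(a),\varphi_t(b))}$ reproduces (up to the chosen sign of $\mathrm{\psi_1}$) the Hochschild coboundary of $\mathrm{\psi_1}$, that the $\mathrm{t}$-coefficient of $\mathrm{\varphi_t\circ R_t=R_t'\circ\varphi_t}$ reproduces $\mathrm{-\phi^1(\psi_1)}$, and that the $\mathrm{t}$-coefficient of $\mathrm{\varphi_t\circ d_t=d_t'\circ\varphi_t}$ reproduces $\mathrm{-\Delta^1(\psi_1)}$, so that the three together are exactly the three components of $\mathrm{\mathfrak{D}^1_{mRBAD^\kappa}(\psi_1)}$. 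Once the conventions are fixed this is a direct matching of formulas; everything else — the power-series expansion and the reduction to the $\mathrm{n=1}$ relations — is routine.
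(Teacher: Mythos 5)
Your proposal is correct and follows essentially the same route as the paper: invoke Theorem~\ref{infy-co} for the cocycle statement, then expand the equivalence relations \eqref{equiv-dend} to first order in $\mathrm{t}$ and identify $\mathrm{(\mu_1',R_1',d_1')-(\mu_1,R_1,d_1)}$ as $\mathrm{\mathfrak{D}^1_{mRBAD^\kappa}(\psi_1)}$. You are in fact slightly more careful than the text, which introduces $\psi_1$ without stating that $\psi_1=-\varphi_1$; your explicit sign choice is the one that makes the displayed coboundary formula correct.
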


We end this section by considering the rigidity of a modified Rota-Baxter AssDer pair.
We also find a sufficient condition for rigidity.

\begin{defi}
A modified Rota-Baxter AssDer pair $\mathrm{(\mathcal{A}, R,d)}$ is called {\bf rigid} if any formal one-parameter deformation
$\mathrm{(\mu_t, R_t,d_t)}$ is equivalent to the undeformed one $\mathrm{(\mu, R,d)}$.
\end{defi}

\begin{thm}
Let $\mathrm{(\mathcal{A}, R,d)}$ be a modified Rota-Baxter AssDer pair. If
$\mathrm{\mathcal{H}^2_\mathrm{mRBAD^\kappa} (A,A) = 0}$ then $\mathrm{(\mathcal{A},R,d)}$ is rigid.
\end{thm}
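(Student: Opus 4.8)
The plan is to run the standard Gerstenhaber-type rigidity argument inside the cochain complex $\big(\mathfrak{C}^\star_{\mathrm{mRBAD}^\kappa}(A,A),\mathfrak{D}^\star_{\mathrm{mRBAD}^\kappa}\big)$. Let $(\mu_t,R_t,d_t)$ be an arbitrary formal one-parameter deformation of $(\mathcal{A},R,d)$. I will prove, by induction on $n\geq 1$, the assertion $P(n)$: the deformation $(\mu_t,R_t,d_t)$ is equivalent to a formal one-parameter deformation $(\mu_t',R_t',d_t')$ of $(\mathcal{A},R,d)$ with $\mu_i'=R_i'=d_i'=0$ for every $1\leq i\leq n-1$. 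Here $P(1)$ is vacuous. Once $P(n)$ holds for all $n$, composing the successive equivalences and passing to the limit in the $t$-adic topology produces a formal isomorphism conjugating $(\mu_t,R_t,d_t)$ into the undeformed triple $(\mu,R,d)$, which is exactly rigidity.

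For the inductive step, assume $\mu_i=R_i=d_i=0$ for $1\leq i\leq n-1$. Extracting the coefficient of $t^n$ from the four identities that make $\big(A[\![t]\!],\mu_t,R_t,d_t\big)$ a modified Rota-Baxter AssDer pair (associativity of $\mu_t$, the modified Rota-Baxter relation of weight $\kappa$ for $R_t$, the derivation relation for $d_t$, and $R_t\circ d_t=d_t\circ R_t$), and using that all intermediate terms $\mu_i,R_i,d_i$ with $0<i<n$ vanish, every cross term disappears and the four identities reduce exactly to the order-$n$ analogues of \eqref{inf-1-4}--\eqref{inf-4-4}. Repeating verbatim the translation carried out in the proof of Theorem~\ref{infy-co} (these relations are linear in $\mu_n,R_n,d_n$ respectively, so the dictionary transfers unchanged with $n$ in place of $1$), one finds that $(\mu_n,R_n,d_n)$ satisfies $\mathfrak{D}^2_{\mathrm{mRBAD}^\kappa}(\mu_n,R_n,d_n)=0$, i.e. it is a $2$-cocycle in $\mathfrak{C}^2_{\mathrm{mRBAD}^\kappa}(A,A)$. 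Since $\mathcal{H}^2_{\mathrm{mRBAD}^\kappa}(A,A)=0$, there exists $\psi_n\in\mathfrak{C}^1_{\mathrm{mRBAD}^\kappa}(A,A)=\mathrm{Hom}(A,A)$ with
\[
(\mu_n,R_n,d_n)=\mathfrak{D}^1_{\mathrm{mRBAD}^\kappa}(\psi_n)=\big(\delta^1_{\mathrm{Hoch}}(\psi_n),\,-\phi^1(\psi_n),\,-\Delta^1(\psi_n)\big).
\]

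Now introduce the elementary formal isomorphism $\varphi_t=\mathrm{Id}+t^n\psi_n:A[\![t]\!]\to A[\![t]\!]$ (the precise sign of $\psi_n$ being dictated by the conventions underlying \eqref{mor-equiv-1}--\eqref{mor-equiv-3}); it has $\varphi_0=\mathrm{Id}$, hence it transports $(\mu_t,R_t,d_t)$ to an equivalent deformation $(\mu_t',R_t',d_t')$ determined by $\varphi_t(\mu_t(a,b))=\mu_t'(\varphi_t a,\varphi_t b)$, $\varphi_t\circ R_t=R_t'\circ\varphi_t$ and $\varphi_t\circ d_t=d_t'\circ\varphi_t$. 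Because $\varphi_t=\mathrm{Id}+O(t^n)$, comparing the coefficients of $t^i$ for $1\leq i\leq n-1$ shows $\mu_i'=R_i'=d_i'=0$ is preserved, while the coefficient of $t^n$ yields $(\mu_n',R_n',d_n')=(\mu_n,R_n,d_n)-\mathfrak{D}^1_{\mathrm{mRBAD}^\kappa}(\psi_n)=0$ — the same bookkeeping as in \eqref{mor-equiv-1}--\eqref{mor-equiv-3}, promoted from order $1$ to order $n$. This establishes $P(n+1)$ and closes the induction. Finally, at the $n$-th stage the composite of the elementary isomorphisms used so far is constant modulo $t^{n}$, so the infinite composite is a well-defined element of $\mathrm{Aut}_{\mathbf{k}[\![t]\!]}\big(A[\![t]\!]\big)$ realizing the equivalence with $(\mu,R,d)$.

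I expect the main obstacle to be the coefficient-chasing in the inductive step: checking that, modulo the vanishing of all lower-order terms, the order-$n$ part of each of the four deformation equations assembles precisely into the single condition $\mathfrak{D}^2_{\mathrm{mRBAD}^\kappa}(\mu_n,R_n,d_n)=0$, and dually that $\varphi_t=\mathrm{Id}+t^n\psi_n$ annihilates the order-$n$ term without disturbing orders $1,\dots,n-1$. Both computations are routine but notation-heavy, and the only genuine subtlety is keeping track of the signs coming from $\delta^1_{\mathrm{Hoch}}$, $\phi^1$, $\Delta^1$ and from the factor $(-1)^n$ in the definition of $\mathfrak{D}^n_{\mathrm{mRBAD}^\kappa}$.
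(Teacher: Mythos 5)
Your proposal is correct and follows essentially the same route as the paper: kill the lowest-order nonzero term of the deformation by a coboundary (using $\mathcal{H}^2_{\mathrm{mRBAD}^\kappa}(A,A)=0$) via the elementary isomorphism $\mathrm{Id}+t^n\psi_n$, and iterate, with the infinite composite converging $t$-adically. In fact you spell out the inductive step and the order-$n$ cocycle condition more explicitly than the paper, which only carries out the first step and concludes ``by repeating the arguments''; the sign ambiguity you flag ($\psi_n$ versus $-\psi_n$) is immaterial.
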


\begin{proof}
Let $\mathrm{(\mu_t, R_t,d_t)}$ be a formal one-parameter deformation of the modified Rota-Baxter AssDer pair
$\mathrm{(A,\mu,R,d)}$. From Theorem \ref{infy-co}, $\mathrm{(\mu_1,R_1,d_1)}$ is a $\mathrm{2}$-cocycle
and as $\mathrm{\mathcal{H}^2_\mathrm{mRBAD^\kappa}(A,A)=0}$, thus, there exists a $1$-cochain
$\psi_1\in\mathfrak{C}^1_\mathrm{mRBAD^\kappa}$ such that
\begin{equation}\label{eqt deformation}
	\mathrm{(\mu_1,R_1,d_1)=-\mathfrak{D}^1_\mathrm{mRBAD^\kappa}(\psi_1)}.
\end{equation}
Then setting $\mathrm{\psi_t=Id + \psi_1t}$, we have a deformation $(\bar{\mu}_t,\bar{R}_t,\bar{d}_t))$, where
\begin{eqnarray*}
\mathrm{\bar{\mu}_t(a,b)}&=&\mathrm{\big(\psi_t^{-1} \circ \mu_t \circ (\psi_t \circ \psi_t)\big)(a,b)},\\
\mathrm{\bar{R}_t(a)}&=&\mathrm{\big(\psi_t^{-1} \circ R_t \circ \psi_t\big)(a)},\\
\mathrm{ \bar{d}_t(a)}&=&\mathrm{\big(\psi_t^{-1} \circ d_t \circ \psi_t\big)(a)}.
\end{eqnarray*}
Thus, $\mathrm{(\bar{\mu}_t,\bar{R}_t,\bar{d}_t)}$ is equivalent to $\mathrm{(\mu_t,R_t,d_t)}$.\\
Moreover, we have
\begin{eqnarray*}
	\mathrm{\bar{\mu}_t(a,b)}&=& \mathrm{Id - \psi_1t+\psi^2t^2+\cdots+(-1)^i\psi_1^it^i+\cdots)
(\mu_t(a+\psi_1(a)t,y+\psi_1(b)t)},  \\
		\mathrm{\bar{R}_t(a)}&=& \mathrm{Id - \psi_1t+\psi^2t^2+\cdots+(-1)^i{\psi_1}^{i}t^i+
			\cdots) (R_t(a+\psi_1(a)t))},	\\
			\mathrm{\bar{d}_t(a)}&=& \mathrm{Id - \psi_1t+\psi^2t^2+\cdots+(-1)^i{\psi_1}^{i}t^i+
			\cdots) (d_t(a+\psi_1(a)t))}.
\end{eqnarray*}
Then,
\begin{eqnarray*}
\mathrm{\bar{\mu}_t(a,b)}&=&\mathrm{\mu(a,b)+\big(\mu_1(a,b)+\mu(a,\psi_1(b))+\mu(\psi_1(\psi_1(a),b)
-\psi_1(\mu(a,b)\big) +\bar{\mu}_2(a,b)t^2+\cdots},\\
\mathrm{\bar{R}_t(a)}&=&\mathrm{R(a)+\big(R(\psi_1(a))+R_1(a)-\psi_1(R(a))\big)t+\bar{R}_2(a)t^2+\cdots,} \\
\mathrm{\bar{d}_t(a)}&=&\mathrm{d(a)+\big(d(\psi_1(a))+d_1(a)-\psi_1(d(x))\big)t+\bar{d}_2(a)t^2+\cdots}.
\end{eqnarray*}
By \eqref{eqt deformation}, we have
\begin{eqnarray*}
	\mathrm{\bar{\mu}_t(a,b)}&=& \mathrm{\mu(a,b) +\bar{\mu}_2(a,b)t^2+\cdots} ,  \\
	\mathrm{\bar{R}_t(a)}&=&\mathrm{ R(a) +\bar{R}_2(a)t^2+\cdots},\\
	\mathrm{\bar{d}_t(a)}&=& \mathrm{d +\bar{d}_2(a)t^2+\cdots} .
\end{eqnarray*}

 Finally, by repeating
 the arguments, we can
 show that $\mathrm{(\mu_t,R_t, d_t)}$ is equivalent to the trivial deformation. Hence, $\mathrm{(A,R,d)}$ is rigid.
\end{proof}
	\section{Abelian extension of a modified Rota-Baxter AssDer pair}\label{sec5}
\def\theequation{\arabic{section}.\arabic{equation}}
\setcounter{equation} {0}
In this section, we study abelian extensions of the modified Rota-Baxter AssDer pair and show
that they are classified by the second cohomology. \\
Let $\mathrm{(\mathcal{A},R,d)}$ be a modified Rota-Baxter AssDer pair. Let  $\mathrm{(\mathcal{M},R_M,d_M)}$ be a triple consisting of a vector space $\mathrm{M}$ and linear maps $\mathrm{R_M:M\rightarrow M}$, $\mathrm{d_M:M\rightarrow M}$. One can see the triple $\mathrm{(\mathcal{M},R_M,d_M)}$ is a modified Rota-Baxter AssDer pair, where $\mathrm{M=(M,\mu_M)}$ is a trivial associative algebra.
\begin{defi}
	Let $\mathrm{(A,\mu,R,d)}$ be a modified Rota-Baxter AssDer pair and $\mathrm{M}$ be a vector space. A modified
	Rota-Baxter AssDer pair $\mathrm{(\hat{A},\mu_{\wedge},{\hat{R}},\hat{d})}$ is called an extension of
	$\mathrm{(A,\mu,R,d)}$ by $\mathrm{(M,\mu_\mathrm{M},R_M,d_M)}$ if there exists a short exact sequence 
	of morphisms of modified Rota-Baxter AssDer pair
	$$\begin{CD}
		0@>>> \mathrm{(\mathcal{M},d_M)} @>\mathrm{i} >> \mathrm{(\hat{\mathcal{A}},\hat{d})} @>\mathrm{p} >> \mathrm{(\mathcal{A},d)} @>>>\mathrm{0}\\
		@. @V {\mathrm{R_M}} VV @V \hat{\mathrm{R}} VV @V \mathrm{R} VV @.\\
		0@>>> \mathrm{(\mathcal{M},d_M)} @>\mathrm{i} >> \mathrm{(\hat{\mathcal{A}},\hat{d})} @>\mathrm{p} >> \mathrm{(\mathcal{A},d)} @>>>0
	\end{CD}$$
	where $\mathrm{\mu_M (m,n)=0}$ for all $\mathrm{m,n \in M}$ and $\mathrm{\hat{\mathcal{A}}=(\hat{A},\mu_\wedge)}$.
\end{defi}
An extension   $\mathrm{(\hat{A},\mu_{\wedge},{\hat{R}},\hat{d})}$ of the modified Rota-Baxter AssDer pair
$\mathrm{(A,\mu,R,d)}$ by $\mathrm{(M,\mu_\mathrm{M},R_M,d_M)}$ is called abelian if the associative algebra $\mathcal{M}$ is trivial.\\
A section of an abelian extension $\mathrm{(\hat{A},\mu_{\wedge},{\hat{R}},\hat{d})}$ of the modified Rota-Baxter AssDer pair
$\mathrm{(A,\mu,R,d)}$ by $\mathrm{(M,\mu_\mathrm{M},R_M,d_M)}$ consists of a linear map $\mathrm{s:A\rightarrow \hat{A}}$ such that $\mathrm{p\circ s=\mathrm{Id}}$. 
In the following, we always assume that $\mathrm{(\hat{\mathcal{A}},\hat{R},\hat{d})}$ is an abelian extension of the modified Rota-Baxter AssDer pair
$\mathrm{(A,\mu,R,d)}$ by $\mathrm{(M,\mu_\mathrm{M},R_M,d_M)}$ and $\mathrm{s}$ is a section of it.
For all $\mathrm{a\in A}$, $\mathrm{m\in M}$ define two bilinear maps
\begin{center}
	$\mathrm{\mathbf{l}:A\times M\rightarrow M}$ and $\mathrm{\mathbf{r}:M\times A\rightarrow M}$ respectively by
\end{center}
  \begin{equation*}
  	\mathrm{\mathbf{l}(a,m)=\mu_\wedge(s(a),m)}, \quad \mathrm{\mathbf{r}(m,a)=\mu_\wedge(m,s(a))}.
  \end{equation*}

\begin{pro}
	With the above notations, $\mathrm{(M,\mathbf{l},\mathbf{r},R_M,d_M)}$ is a representation of the modified Rota-Baxter AssDer 
	pair $\mathrm{(\mathcal{A},R,d)}$.
\end{pro}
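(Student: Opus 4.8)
The plan is to fix the section $s$ once and for all, identify $\mathrm{M}$ with its image $\mathrm{i(M)}=\ker\mathrm{p}\subseteq\hat{A}$, and first record the three ``defect'' relations measuring the failure of $s$ to respect the structure maps. Since $\mathrm{p}$ is a morphism of modified Rota-Baxter AssDer pairs with $\mathrm{p\circ s=Id}$, applying $\mathrm{p}$ shows that for all $\mathrm{a,b\in A}$ one has $\mathrm{\mu_\wedge(s(a),s(b))-s(\mu(a,b))\in M}$, $\mathrm{\hat{R}(s(a))-s(R(a))\in M}$ and $\mathrm{\hat{d}(s(a))-s(d(a))\in M}$. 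Dually, since $\mathrm{i}$ is a morphism, $\mathrm{\hat{R}|_M=R_M}$ and $\mathrm{\hat{d}|_M=d_M}$, and abelianness of the extension gives $\mathrm{\mu_\wedge(M,M)=\{0\}}$. These are the only ingredients; every required identity will follow by substituting $\mathrm{s(a)}$ and elements of $\mathrm{M}$ into a structure equation for $\mathrm{(\hat{\mathcal{A}},\hat{R},\hat{d})}$ and killing the resulting $\mathrm{M\cdot M}$ terms via $\mathrm{\mu_\wedge(M,M)=0}$.

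First I would verify that $\mathrm{(M,\mathbf{l},\mathbf{r})}$ is an $\mathcal{A}$-bimodule. For the left axiom, $\mathrm{\mathbf{l}(a,\mathbf{l}(b,m))=\mu_\wedge(s(a),\mu_\wedge(s(b),m))=\mu_\wedge(\mu_\wedge(s(a),s(b)),m)}$ by associativity of $\mathrm{\mu_\wedge}$; writing $\mathrm{\mu_\wedge(s(a),s(b))=s(\mu(a,b))+m_{a,b}}$ with $\mathrm{m_{a,b}\in M}$ and using $\mathrm{\mu_\wedge(M,M)=0}$ gives $\mathrm{\mu_\wedge(s(\mu(a,b)),m)=\mathbf{l}(\mu(a,b),m)}$. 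The mixed axiom $\mathrm{\mathbf{l}(a,\mathbf{r}(m,b))=\mathbf{r}(\mathbf{l}(a,m),b)}$ and the right axiom $\mathrm{\mathbf{r}(m,\mu(a,b))=\mathbf{r}(\mathbf{r}(m,a),b)}$ are obtained in exactly the same way, using associativity of $\mathrm{\mu_\wedge}$ and discarding the correction term that lands in $\mathrm{M}$ once it is multiplied by an element of $\mathrm{M}$.

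Next I would check the modified Rota-Baxter compatibility \eqref{rep mRBA1}--\eqref{rep mRBA2}. Apply the defining identity of the modified Rota-Baxter operator $\mathrm{\hat{R}}$ on $\hat{\mathcal{A}}$ to the pair $\mathrm{(s(a),m)}$ with $\mathrm{m\in M}$. On the left-hand side $\mathrm{\hat{R}(s(a))=s(R(a))+(\text{element of }M)}$ and $\mathrm{\hat{R}(m)=R_M(m)}$, so the $\mathrm{M\cdot M}$ term drops and the left side becomes $\mathrm{\mathbf{l}(R(a),R_M(m))}$; on the right-hand side the inner arguments simplify to $\mathrm{\mathbf{l}(R(a),m)+\mathbf{l}(a,R_M(m))\in M}$, on which $\mathrm{\hat{R}}$ acts as $\mathrm{R_M}$, and the weight term is $\mathrm{\kappa\,\mu_\wedge(s(a),m)=\kappa\,\mathbf{l}(a,m)}$; this is precisely \eqref{rep mRBA1}. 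Feeding $\mathrm{(m,s(a))}$ into the same identity yields \eqref{rep mRBA2}. For the derivation axioms, applying the Leibniz rule for $\mathrm{\hat{d}}$ to $\mathrm{\mu_\wedge(s(a),m)}$ and to $\mathrm{\mu_\wedge(m,s(a))}$ and again discarding the $\mathrm{M\cdot M}$ terms coming from $\mathrm{\hat{d}(s(a))-s(d(a))\in M}$ gives \eqref{rep mRBAD1} and \eqref{rep mRBAD2}, while \eqref{rep mRBAD3} is immediate by restricting $\mathrm{\hat{R}\circ\hat{d}=\hat{d}\circ\hat{R}}$ to $\mathrm{M}$, where it reads $\mathrm{R_M\circ d_M=d_M\circ R_M}$.

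The computations are routine; the only point requiring care is the consistent use of the three defect relations together with $\mathrm{\mu_\wedge(M,M)=0}$, i.e.\ making sure that whenever $\mathrm{\mu_\wedge(s(a),s(b))}$, $\mathrm{\hat{R}(s(a))}$ or $\mathrm{\hat{d}(s(a))}$ occurs multiplied by an element of $\mathrm{M}$ one may legitimately replace it by $\mathrm{s(\mu(a,b))}$, $\mathrm{s(R(a))}$ or $\mathrm{s(d(a))}$ respectively. I do not expect any genuine obstacle beyond this bookkeeping.
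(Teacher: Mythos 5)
Your proposal is correct and takes essentially the same approach as the paper: the paper simply cites \cite{Das2} for the fact that $\mathrm{(M,\mathbf{l},\mathbf{r},R_M)}$ is already a bimodule over $\mathrm{(\mathcal{A},R)}$ and then verifies only the derivation compatibilities, doing so by exactly your argument --- observing that $\mathrm{s(d(a))-\hat{d}(s(a))\in \mathrm{Ker}\, p = M}$, multiplying by $\mathrm{m\in M}$, and invoking $\mathrm{\mu_\wedge(M,M)=0}$ together with the Leibniz rule for $\mathrm{\hat{d}}$. Your explicit verification of the bimodule and modified Rota--Baxter axioms is the same substitute-and-discard computation that the cited reference carries out, so nothing is missing.
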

%%%%%%%%%%%%%%%%%%%%%%%%%%%%%%%%%%%%%%%%
\begin{proof}
	From \cite{Das2} we have $(\mathrm{M,\mathbf{l,r},R_M})$ is a bimodule over $\mathrm{(\mathcal{A},R)}$. Let $a\in A$ and $m\in M$ then we have $\mathrm{p(s(da)-\hat{d}(s(a)))=da-d(p(s(a)))=0}$ which means that 
	\begin{equation*}
		\mathrm{s(da)-\hat{d}(s(a))\in M}.
	\end{equation*}
Then 
\begin{align*}
	&\mathrm{\mu_\wedge(s(da)-\hat{d}(s(a)),m)=0}\\
	&\mathrm{\mu_\wedge(s(da),m)-\mu_\wedge(\hat{d}s(a),m)=0}\\
	&\mathrm{\mu_\wedge(s(da),m)-d_M(\mu_\wedge(s(a),m))+\mu_\wedge(s(a),d_M(m))=0}\\
	&\mathrm{\mathbf{l}(da,m)-d_M(\mathbf{l}(a,m))+\mathbf{l}(a,d_M(m))=0}.
\end{align*}
Those $\mathrm{d_M(\mathbf{l}(a,m))=\mathbf{l}(da,m)+\mathbf{l}(a,d_M(m))}$\\
Similarly we obtain $\mathrm{d_M(\mathbf{r}(m,a))=\mathbf{r}(d_M(m),a)+\mathbf{r}(m,da)}$. This complete the proof.
\end{proof}
For any $\mathrm{a,b\in A}$ and $\mathrm{m\in M}$, define
$\mathrm{\Theta:\otimes^2A\rightarrow M}$, $\mathrm{\chi:A\rightarrow M}$ and $\mathrm{\xi:A\rightarrow M}$ as follows
\begin{eqnarray*}
	\mathrm{\Theta(a,b)}&=&\mathrm{\mu_\wedge(s(a),s(b))-s(\mu (a,b))},\\
	\mathrm{\chi(a)}&=&\mathrm{\hat{d}(s(a))-s(d(a))},\\
	\mathrm{\xi(a)}&=&\mathrm{\hat{R}(s(a))-s(R(a)),\quad \forall a,b\in A}.
\end{eqnarray*}
These linear maps lead to define \\
$\mathrm{R_\xi:A\oplus M\rightarrow A\oplus M}$ and $\mathrm{d_\chi:A\oplus M\rightarrow A\oplus M}$ by
\begin{eqnarray*}
	\mathrm{R_\xi(a+m)}&=&\mathrm{R(a)+R_M(m)+\xi(a)},\\
	\mathrm{d_\chi(a)}&=&\mathrm{d(a)+d_M(m)+\chi(a)}.
\end{eqnarray*}
\begin{thm}\label{theorem ext}
	With the above notations, the quadruple $\mathrm{(A\oplus M,\mu_\Theta,R_\xi,d_\chi)}$ where
	\begin{equation*}
		\mathrm{\mu_\Theta(a+u,b+v)=\mu(a,b)+\Theta(a,b),\quad \forall a,b\in A,\quad \forall m,n\in M},
	\end{equation*}
	is a modified Rota-Baxter AssDer pair if and only if $\mathrm{(\Theta,\xi,\chi)}$ is a $\mathrm{2}$-cocycle of the modified Rota-Baxter AssDer pair $\mathrm{(\mathcal{A},R,d)}$ with coefficients in the trivial $\mathcal{A}$-bimodule.
\end{thm}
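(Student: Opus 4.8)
The plan is to verify the four defining axioms of a modified Rota-Baxter AssDer pair for the quadruple $\mathrm{(A\oplus M,\mu_\Theta,R_\xi,d_\chi)}$ one at a time, and to observe that each axiom, after projecting onto the $M$-component (the $A$-component being automatic since $\mathrm{(\mathcal{A},R,d)}$ is already a modified Rota-Baxter AssDer pair), is equivalent to exactly one of the four component equations obtained by unwinding $\mathrm{\mathfrak{D}^2_{\mathrm{mRBAD^\kappa}}(\Theta,\xi,\chi)=0}$ as in Section~\ref{sec-4}. Throughout I use that $M$ is an abelian ideal of $\mathrm{\mu_\Theta}$ (so products of two elements of $M$ vanish), that the twisted multiplication $\mathrm{\mu_\Theta}$ on $\mathrm{A\oplus M}$ is the $\Theta$-perturbation of the semidirect product built from $\mathrm{\mathbf{l},\mathbf{r}}$, that $\mathrm{(M,\mathbf{l},\mathbf{r},R_M,d_M)}$ is a representation of $\mathrm{(\mathcal{A},R,d)}$ by the proposition just proved, and the compatibility identities \eqref{rep mRBA1}, \eqref{rep mRBA2}, \eqref{rep mRBAD1}, \eqref{rep mRBAD2}, \eqref{rep mRBAD3}, \eqref{equation RBAD1}.

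First, expanding $\mathrm{\mu_\Theta(\mu_\Theta(a+m,b+n),c+p)-\mu_\Theta(a+m,\mu_\Theta(b+n,c+p))}$ and taking the $M$-part: every term containing exactly one of $m,n,p$ cancels because $\mathrm{(M,\mathbf{l},\mathbf{r})}$ is an $\mathcal{A}$-bimodule, and the remaining terms $\mathrm{\mathbf{l}(a,\Theta(b,c))}$, $\mathrm{\mathbf{r}(\Theta(a,b),c)}$, $\mathrm{\Theta(\mu(a,b),c)}$, $\mathrm{\Theta(a,\mu(b,c))}$ reassemble precisely into $\mathrm{\delta^2_{\mathrm{Hoch}}(\Theta)(a,b,c)}$; hence associativity of $\mathrm{\mu_\Theta}$ is equivalent to $\mathrm{\delta^2_{\mathrm{Hoch}}(\Theta)=0}$. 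Next, applying $\mathrm{d_\chi}$ to a $\mathrm{\mu_\Theta}$-product and using \eqref{rep mRBAD1} and \eqref{rep mRBAD2} to kill the action-linear terms, the residual $M$-component reads $\mathrm{\Theta(da,b)+\Theta(a,db)-d_M\Theta(a,b)+\big(\text{the }\chi\text{-coboundary terms}\big)=0}$, i.e. $\mathrm{\delta^1_{\mathrm{Hoch}}(\chi)(a,b)+\Delta^2(\Theta)(a,b)=0}$, recognizing $\mathrm{\Delta^2(\Theta)(a,b)=\Theta(da,b)+\Theta(a,db)-d_M\Theta(a,b)}$ from \eqref{coboundary derivation}. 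Finally, computing $\mathrm{R_\xi\circ d_\chi-d_\chi\circ R_\xi}$ on $\mathrm{a+m}$ and using $\mathrm{R\circ d=d\circ R}$ together with \eqref{rep mRBAD3}, the $M$-part collapses to $\mathrm{\xi(da)-d_M\xi(a)-\chi(Ra)+R_M\chi(a)=0}$, which is exactly $\mathrm{\Delta^1(\xi)(a)-\phi^1(\chi)(a)=0}$.

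The delicate step is the modified Rota-Baxter identity for $\mathrm{R_\xi=R\oplus R_M\oplus\xi}$. Expanding $\mathrm{\mu_\Theta(R_\xi(a+m),R_\xi(b+n))}$ against $\mathrm{R_\xi\big(\mu_\Theta(R_\xi(a+m),b+n)+\mu_\Theta(a+m,R_\xi(b+n))\big)+\kappa\,\mu_\Theta(a+m,b+n)}$ and projecting to $M$: the contributions involving neither $\xi$ nor $\Theta$ vanish because $R$ is a modified Rota-Baxter operator of weight $\kappa$ on $\mathrm{(\mathcal{A},\mu)}$; the contributions linear in $m$ or $n$ vanish by \eqref{rep mRBA1} and \eqref{rep mRBA2}; and what remains splits into the terms coming from $\xi$, which assemble into $\mathrm{-\delta^1_{\mathrm{mHoch^\kappa}}(\xi)(a,b)}$, and the terms in which $\Theta$ is evaluated with $R$ on its slots and post-composed with $\mathrm{R_M}$, which must assemble into $\mathrm{-\phi^2(\Theta)(a,b)}$. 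Matching the latter is the combinatorial heart of the proof: for $n=2$ the subset sums defining $\phi^2$ reduce to the single term $\mathrm{\Theta(Ra,Rb)}$, the two terms $\mathrm{R_M\Theta(Ra,b)}$ and $\mathrm{R_M\Theta(a,Rb)}$, and the $\kappa$-weighted term $\mathrm{R_M\Theta(a,b)}$, and one has to check that they occur with exactly the signs and powers of $-\kappa$ prescribed by the formula for $\phi^n$. I expect this verification, rather than the other three axioms, to be where the bookkeeping is heaviest.

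Putting the four equivalences together, $\mathrm{(A\oplus M,\mu_\Theta,R_\xi,d_\chi)}$ is a modified Rota-Baxter AssDer pair if and only if $\mathrm{\delta^2_{\mathrm{Hoch}}(\Theta)=0}$, $\mathrm{\delta^1_{\mathrm{mHoch^\kappa}}(\xi)+\phi^2(\Theta)=0}$, $\mathrm{\delta^1_{\mathrm{Hoch}}(\chi)+\Delta^2(\Theta)=0}$, and $\mathrm{\Delta^1(\xi)-\phi^1(\chi)=0}$ hold simultaneously; by the definition of $\mathrm{\mathfrak{D}^\star_{\mathrm{mRBAD^\kappa}}}$ in \eqref{mRBLD coboundary} (read in the $n=2$ case, exactly as unwound in Section~\ref{sec-4}) these are precisely the components of $\mathrm{\mathfrak{D}^2_{\mathrm{mRBAD^\kappa}}(\Theta,\xi,\chi)=0}$, that is, $\mathrm{(\Theta,\xi,\chi)}$ is a $2$-cocycle with coefficients in the trivial $\mathcal{A}$-bimodule. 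The converse direction requires no extra work, since every implication above is an equivalence.
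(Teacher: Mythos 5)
Your proof follows the same skeleton as the paper's: verify the four axioms of a modified Rota-Baxter AssDer pair for $\mathrm{(A\oplus M,\mu_\Theta,R_\xi,d_\chi)}$ one at a time, and identify each with one component of $\mathrm{\mathfrak{D}^2_{\mathrm{mRBAD^\kappa}}(\Theta,\xi,\chi)=0}$; since every step is an equivalence, both directions come for free. In outline this is correct and matches the paper. Two caveats. First, you describe $\mathrm{\mu_\Theta}$ as the $\Theta$-perturbation of the semidirect product built from $\mathrm{\mathbf{l},\mathbf{r}}$, so your associativity computation produces the terms $\mathrm{\mathbf{l}(a,\Theta(b,c))}$ and $\mathrm{\mathbf{r}(\Theta(a,b),c)}$ and your cocycle lives in the complex with coefficients in the induced bimodule $\mathrm{(M,\mathbf{l},\mathbf{r},R_M,d_M)}$. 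The theorem as stated, however, takes $\mathrm{\mu_\Theta(a+u,b+v)=\mu(a,b)+\Theta(a,b)}$ with no action terms and asserts the cocycle condition in the \emph{trivial} bimodule, and the paper's computation is carried out accordingly (the $M$-part of the associator is just $\mathrm{\Theta(\mu(a,b),c)-\Theta(a,\mu(b,c))}$). These are genuinely different conditions on $\mathrm{\Theta}$; your reading is the natural one given that the preceding proposition constructs $\mathrm{\mathbf{l},\mathbf{r}}$ from a section, but it is not the statement you were asked to prove, so you should either adjust to the trivial-bimodule version or say explicitly that you are proving the semidirect-product variant. Second, you explicitly defer the one step you yourself identify as the heart of the argument, namely that the residual $\Theta$-terms in the modified Rota-Baxter identity assemble into $\mathrm{-\phi^2(\Theta)}$ with the signs and powers of $-\kappa$ prescribed by the general formula for $\mathrm{\phi^n}$. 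That check should actually be carried out: the direct expansion (as in the paper) yields a term $\mathrm{-\kappa\,\Theta(a,b)}$, whereas the displayed definition of $\mathrm{\phi^2}$ contributes $\mathrm{-(-\kappa)^{2}\,R_M\Theta(a,b)=-\kappa^2 R_M\Theta(a,b)}$ from the $r=2$ summand, so the matching is not the formality you suggest and is precisely where the bookkeeping can go wrong. Until that identification is verified (or the definition of $\mathrm{\phi^2}$ reconciled with the expansion), the second of your four equivalences is asserted rather than proved.
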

\begin{proof}
	If $\mathrm{(A\oplus M,\mu_\Theta,R_\xi,d_\chi)}$ is a modified Rota-Baxter AssDer pair, it means that $\mathrm{(\mathcal{A},\mu_\Theta)}$ is an associative algebra, $\mathrm{R_\xi}$ is a modified Rota-Baxter operator, $\mathrm{d_\chi}$ is a derivation and $\mathrm{R_\xi\circ d_\chi=d_\chi\circ R_\xi}$.\\
	The couple $\mathrm{(\mathcal{A},\mu_\Theta)}$ is an associative algebra means that
	\begin{equation*}
		\mathrm{\mu_\Theta(\mu_\Theta(a+m,b+n),c+q)-\mu_\Theta(a+m,\mu_\Theta(b+n,c+q))=0}.
	\end{equation*}
	Which is exactly
	\begin{equation}
		\mathrm{\delta^2_\mathrm{Hoch}\Theta=0}.\label{ext1}
	\end{equation}
	And $\mathrm{R_\xi}$ is a modified Rota-Baxter operator on $\mathrm{(\mathcal{A},\mu_\Theta)}$ means that
	\begin{equation*}
		\mathrm{\mu_\Theta(R_\xi(a+m),R_\xi(b+n))=R_\xi\Big(\mu_\Theta(R_\xi(a+u),b+v)+\mu_\Theta(a+u,R_\xi(b+v))\Big)
			+\kappa \mu_\Theta(a+u,b+v)}.
	\end{equation*}
	Since $\mathrm{(\mathcal{A},R)}$ is a modified Rota-Baxter Lie algebra and using equation 
	\eqref{rep mRBAD3} we get
	\begin{align*}
		&\mathrm{\mu_\theta(R_\xi(a+m),R_\xi(b+n))-R_\xi\Big(\mu_\theta(R_\xi(a+m),b+n) + \mu_\theta(a+m,R_\xi(b+n))\Big)-\kappa \mu_\theta(a+m,b+n)}\\
		=&\mathrm{\mu_\theta(R(a)+R_M(m)+\xi(R(a)),R(b)+R_M(n)+\xi(R(b))) -R_\xi(\mu_\theta(R(a)+R_M(m)+\xi(R(a)),b+n))}\\
		&-\mathrm{R_\xi( \mu_\theta(a+m,R(b)+R_M(n)+\xi(R(b))))-\kappa \mu_\theta(a+m,b+n)}\\
		=&\mathrm{\mu(R(a),R(b))+\Theta(R(a),R(b))-R_\xi(\mu(R(a),b) +\Theta(R(a),b))-R_\xi(\mu(a,R(b)) +\Theta(a,R(b)))-\kappa\mu(a,b) -\kappa\Theta(a,b)}\\
		=&\mathrm{\Theta(R(a),R(b))-R_M\Theta(R(a),b)-R_M(a,R(b))-\kappa \Theta(a,b)-\xi(\mu(R(a),b) +\mu(a,R(b)) )}.
	\end{align*}
	Then $\mathrm{R_\xi}$ is a modified Rota-Baxter operator on $\mathrm{(\mathcal{A},\mu_\theta)}$ if and only if
	\begin{equation*}
		\mathrm{\Theta(R(a),R(b))-R_M\Theta(R(a),b)-R_M(a,R(b))-\kappa \Theta(a,b)-\xi(\mu(R(a),b) +\mu(a,R(b)) )}.
	\end{equation*}
	Which is exactly
	\begin{equation}\label{ext2}
		\mathrm{-\delta^1_\mathrm{mHoch}(\xi)+\Delta^2(\Theta)=0}.
	\end{equation}
	And $\mathrm{d_\chi}$ is a derivation on the associative algebra $\mathrm{(\mathcal{A},\mu_\theta)}$ if and only if
	\begin{equation*}
		\mathrm{d_\chi \mu_\theta(a+m,b+n)=\mu_\theta(d_\chi(a+m),b+n) + \mu_\theta(a+m,d_\chi(b+n))}.
	\end{equation*}
	Using the fact that $\mathrm{d}$ is a derivation on $\mathrm{\mathcal{A}}$ we obtain
	\begin{align*}
		&\mathrm{d_\chi(\mu_\theta(a+m,b+n) )- \mu_\theta(d_\chi(a+m),b+n) -\mu_\theta(a+m,d_\chi(b+n))}\\
		=&\mathrm{d_\chi(\mu_\theta(a+m,b+n) )- \mu_\theta(d(a)+d_M(m)+\chi(a),b+n) - \mu_\theta(a+m,d(b)+d_M(n)+\chi(b))}\\
		=&\mathrm{d_\chi(\mu(a,b) +\Theta(a,b))- \mu(d(a),b) -\Theta(d(a),b)-\mu(a,d(b)) -\Theta(a,d(b))}\\
		=&\mathrm{d \mu(a,b) +d_M(\Theta(a,b))+\chi(\mu(a,b))- \mu(d(a),b) -\Theta(d(a),b)- \mu(a,d(b)) -\Theta(a,d(b))}\\
		=&\mathrm{d_M(\Theta(a,b))-\Theta(d(a),b)-\Theta(a,d(b))+\chi(\mu(a,b))}.
	\end{align*}
	Then $\mathrm{d_\chi}$ is a derivation on the associative algebra $\mathrm{(\mathcal{A},\mu_\theta)}$ if and only if
	\begin{equation*}
		\mathrm{d_M(\Theta(a,b))-\Theta(d(a),b)-\Theta(a,d(b))+\chi(\mu(a,b))}.
	\end{equation*}
	Which is exactly
	\begin{equation}
		\mathrm{\delta^1_\mathrm{Hoch}(\chi)+\Delta^2(\Theta)=0}.\label{ext3}
	\end{equation}
	Finally, using the equations \eqref{equation RBAD1} and \eqref{rep mRBAD3} we get
	\begin{align*}
		&\mathrm{d_\chi\circ R_\xi(a+m)-d_\xi\circ d_\chi(a+m)}\\
		=&\mathrm{d_\chi(R(a)+R_M(m)\xi(a))-R_\xi(d(a)+d_M(m)+\chi(a))}\\
		=&\mathrm{d(R(a))+d_M(R_M(m)+\xi(a))+\chi(R(a))-R(d(a))-R_M(d_M(m)+\chi(a))-\xi(d(a))}\\
		=&\mathrm{d_M(\xi(a))-\xi(d(a))+\chi(R(a))-R_M(\chi(a))}
	\end{align*}
	then $\mathrm{R_\xi}$ and $\mathrm{d_\chi}$ commute if and only if
	\begin{equation*}
		\mathrm{d_M(\xi(a))-\xi(d(a))+\chi(R(a))-R_M(\chi(a))=0,\quad \forall a\in A}.
	\end{equation*}
	Which is exactly
	\begin{equation}\label{ext4}
		\mathrm{\Delta^1(\xi)-\phi^1(\chi)=0}.
	\end{equation}
	In conclusion, $\mathrm{(A\oplus M,\mu_\Theta,R_\xi,d_\chi)}$ is a modified Rota-Baxter AssDer pair if and 
	only if equations \eqref{ext1}, \eqref{ext2}, \eqref{ext3}, \eqref{ext4} hold.\\
	For the second sense, if $\mathrm{(\Theta,\xi,\chi)\in\mathfrak{C}^2_\mathrm{mRBAD^\kappa}(A;V)}$ is a 
	$\mathrm{2}$-cocycle if and only if
	\begin{equation*}
		\mathrm{(\partial_{mRBAA^\kappa}^2(\theta,\xi),\partial_{mRBAA^\kappa}^1(\chi)+\Delta^2(\theta,\xi))=0},
	\end{equation*}
which is equivalent to
	\begin{equation*}
		\mathrm{\Big(\delta^2_\mathrm{Hoch}(\Theta),-\delta^1_\mathrm{mHoch}(\xi)-\phi^2(\Theta),  
			\delta^1_\mathrm{Hoch}(\chi)+\Delta^2(\Theta),\Delta^1(\xi)-\phi^1(\chi)\Big)=0}.
	\end{equation*}
	This means that equations \eqref{ext1}, \eqref{ext2}, \eqref{ext3}, \eqref{ext4} are satisfied. \\
	
	Thus $\mathrm{\Big(\delta^2_\mathrm{Hoch}(\Theta),-\delta^1_\mathrm{mHoch}(\xi)-\phi^2(\Theta),  
		\delta^1_\mathrm{Hoch}(\chi)+\Delta^2(\Theta),\Delta^1(\xi)-\phi^1(\chi)\Big)=0}$
	if and only if $\mathrm{(A\oplus M,\mu_\Theta)}$ is an associative algebra, $\mathrm{R_\xi}$ is a modified 
	Rota-Baxter operator of  $\mathrm{(A\oplus M,\mu_\Theta)}$, $\mathrm{d_\chi}$ is a derivation of 
	$\mathrm{(A\oplus M,\mu_\Theta)}$ and $\mathrm{d_\chi\circ R_\xi=R_\xi\circ d_\chi}$. This complete the proof.

\end{proof}
\begin{defi}
	Two abelian extensions $\mathrm{(\hat{A}_1,\mu_{\wedge_1},{\hat{R}_1},\hat{d}_1)}$ and 
	$\mathrm{(\hat{A}_2,\mu_{\wedge_2},{\hat{R}_2},\hat{d}_2)}$ of a modified Rota-Baxter AssDer pair 
	$\mathrm{(A,\mu,R,d)}$ by $\mathrm{(M,\mu_\mathrm{M},R_M,d_M)}$ are called equivalent if there exists a 
	homomorphism of modified Rota-Baxter AssDer pairs 
	$\mathrm{\gamma: (\hat{A}_1,\mu_{\wedge_1},{\hat{R}_1},\hat{d}_1) \rightarrow  
		(\hat{A}_2,\mu_{\wedge_2},{\hat{R}_2},\hat{d}_2)}$ such that the following diagram commutes
	
	$$\begin{CD}
		0@>>> {\mathrm{(M,\mathrm{R}_M,\mathrm{d_M})}} @>\mathrm{i_1} >> \mathrm{(\hat{A}_1,\hat {\mathrm{R}}_1,\hat{d}_1)} @>\mathrm{p_1} >> \mathrm{(A,\mathrm{R},\mathrm{d})} @>>>\mathrm{0}\\
		@. @| @V \mathrm{\gamma} VV @| @.\\
		0@>>> {\mathrm{(M,\mathrm{R}_M,\mathrm{d_M})}} @>\mathrm{i_2} >> \mathrm{(\hat{A}_2,\hat {\mathrm{R}}_2,\hat{d}_2)} @>\mathrm{p_2} >> \mathrm{(A,\mathrm{R},\mathrm{d})} @>>>\mathrm{0}.
	\end{CD}$$
\end{defi}
\begin{thm}
	Abelian extensions of modified Rota-Baxter AssDer pair $\mathrm{(A,\mu,R,d)}$ by $\mathrm{(M,\mu_\mathrm{M},R_M,d_M)}$
	are classified by the second cohomology $\mathrm{\mathcal{H}^2_\mathrm{mRBAD^\kappa}(A;M)}$ of the modified 
	Rota-Baxter AssDer pair $\mathrm{(A,\mu,R,d)}$ with coefficients in the trivial bimodule.
\end{thm}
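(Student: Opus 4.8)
The plan is to build a bijection between the set of equivalence classes of abelian extensions of $\mathrm{(A,\mu,R,d)}$ by $\mathrm{(M,\mu_M,R_M,d_M)}$ and the group $\mathrm{\mathcal{H}^2_{mRBAD^\kappa}(A;M)}$. All the analytic content is already packaged in Theorem \ref{theorem ext} (which says precisely that $\mathrm{(A\oplus M,\mu_\Theta,R_\xi,d_\chi)}$ is a modified Rota-Baxter AssDer pair iff $\mathrm{(\Theta,\xi,\chi)}$ is a $\mathrm{2}$-cocycle) and in the coboundary bookkeeping of Section \ref{sec-4}; the remaining work is to check the two maps are well defined on equivalence/cohomology classes and are mutually inverse.

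First I would send an extension to a cohomology class. Given an abelian extension $\mathrm{(\hat{\mathcal{A}},\hat{R},\hat{d})}$ and a section $\mathrm{s}$, the maps $\mathrm{\Theta,\xi,\chi}$ defined just before Theorem \ref{theorem ext} make sense, and $\mathrm{s}$ identifies $\mathrm{\hat A}$ linearly with $\mathrm{A\oplus M}$ so that the transported structure is exactly $\mathrm{(\mu_\Theta,R_\xi,d_\chi)}$; by the forward implication of Theorem \ref{theorem ext}, $\mathrm{(\Theta,\xi,\chi)}$ is a $\mathrm{2}$-cocycle. I would then show independence of the section: if $\mathrm{s'}$ is another section then $\mathrm{p\circ(s'-s)=0}$, so $\mathrm{\psi_1:=s'-s}$ is a map $\mathrm{A\to M}$, i.e.\ an element of $\mathrm{\mathfrak{C}^1_{mRBAD^\kappa}(A;M)}$, and expanding $\mathrm{\Theta',\xi',\chi'}$ in terms of $\mathrm{\psi_1}$ gives, by the same manipulation as in the equivalence-of-deformations computation of Section \ref{sec-4},
\[
\mathrm{(\Theta',\xi',\chi')-(\Theta,\xi,\chi)=\big(\delta^1_{Hoch}(\psi_1),-\phi^1(\psi_1),-\Delta^1(\psi_1)\big)=\mathfrak{D}^1_{mRBAD^\kappa}(\psi_1)}.
\]
Finally, if $\mathrm{\gamma}$ is an equivalence of abelian extensions and $\mathrm{s_1}$ a section of the source, then $\mathrm{s_2:=\gamma\circ s_1}$ is a section of the target (because $\mathrm{p_2\circ\gamma=p_1}$), and since $\mathrm{\gamma}$ is a homomorphism of modified Rota-Baxter AssDer pairs restricting to $\mathrm{Id}$ on $\mathrm{M}$, the triples obtained from $\mathrm{s_1}$ and $\mathrm{s_2}$ coincide; hence the class $\mathrm{[(\Theta,\xi,\chi)]\in\mathcal{H}^2_{mRBAD^\kappa}(A;M)}$ depends only on the equivalence class of the extension.

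Conversely I would send a class to an extension. To a $\mathrm{2}$-cocycle $\mathrm{(\Theta,\xi,\chi)}$ associate $\mathrm{(A\oplus M,\mu_\Theta,R_\xi,d_\chi)}$, which by the converse implication of Theorem \ref{theorem ext} is a modified Rota-Baxter AssDer pair, together with the obvious $\mathrm{i:M\hookrightarrow A\oplus M}$ and $\mathrm{p:A\oplus M\to A}$; this is an abelian extension, with tautological section $\mathrm{a\mapsto a}$. If $\mathrm{(\Theta',\xi',\chi')-(\Theta,\xi,\chi)=\mathfrak{D}^1_{mRBAD^\kappa}(\psi_1)}$ for some $\mathrm{\psi_1\in\mathrm{Hom}(A,M)}$, I would define $\mathrm{\gamma:A\oplus M\to A\oplus M}$ by $\mathrm{\gamma(a+m)=a+\psi_1(a)+m}$ and verify (here the three components of $\mathrm{\mathfrak{D}^1_{mRBAD^\kappa}(\psi_1)}$ are used in turn, one for $\mathrm{\mu_\Theta}$, one for $\mathrm{R_\xi}$, one for $\mathrm{d_\chi}$) that $\mathrm{\gamma}$ is an isomorphism of modified Rota-Baxter AssDer pairs fitting into the commuting ladder of the equivalence definition; thus cohomologous cocycles give equivalent extensions.

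It then remains to note that the two assignments are mutually inverse: feeding $\mathrm{(A\oplus M,\mu_\Theta,R_\xi,d_\chi)}$ with its tautological section back into the first construction returns $\mathrm{(\Theta,\xi,\chi)}$, while a section of a given extension recovers that extension, up to equivalence, from its associated cocycle. This yields the asserted classification, with the zero class corresponding to the semidirect product $\mathcal{A}\ltimes_{\mathrm{mRBAD}}\mathcal{M}$. The main obstacle is the two compatibility verifications — the displayed change-of-section identity and the check that $\mathrm{\gamma}$ is a morphism — since each requires tracking simultaneously the four defining constraints (associativity of $\mathrm{\mu_\Theta}$, the weight-$\kappa$ modified Rota-Baxter identity for $\mathrm{R_\xi}$, the Leibniz rule for $\mathrm{d_\chi}$, and $\mathrm{R_\xi\circ d_\chi=d_\chi\circ R_\xi}$) and matching the output term by term against the four components of $\mathrm{\mathfrak{D}^1_{mRBAD^\kappa}}$; these are, however, formally the same computations already done in Theorem \ref{theorem ext} and in Section \ref{sec-4}, so no genuinely new difficulty arises.
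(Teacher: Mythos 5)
Your proposal is correct and follows essentially the same route as the paper: extract a $2$-cocycle from a section via Theorem \ref{theorem ext}, show the class is independent of the section (the difference of two sections lands in $\mathrm{M}$ and produces exactly $\mathfrak{D}^1_{\mathrm{mRBAD^\kappa}}$ of a $1$-cochain) and invariant under equivalence of extensions, and conversely send cohomologous cocycles to equivalent extensions via $\gamma(a+m)=a+\psi_1(a)+m$. The only addition beyond the paper's write-up is your explicit remark that the two assignments are mutually inverse, which is a harmless (and welcome) tightening.
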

\begin{proof}
	Let $\mathrm{(\hat{A},\mu_\wedge,\hat{R},\hat{d})}$ be an abelian extension of a modified Rota-Baxter AssDer 
	pair $\mathrm{(A,\mu,R,d)}$ by $\mathrm{(M,\mu_\mathrm{M},R_M,d_M)}$. Let $\mathrm{s}$ be a section of it where 
	$\mathrm{s:A\rightarrow\hat{A}}$, we already have a $\mathrm{2}$-cocycle $\mathrm{(\Theta,\xi,\chi)\in\mathfrak{C}^2_\mathrm{mRBAD^\kappa}(A;M)}$ 
	by theorem \eqref{theorem ext}.\\
	First, we prove that the cohomological class of $\mathrm{(\Theta,\xi,\chi)}$ does not depend on the choice of 
	sections. Assume that $\mathrm{(s_1,s_2)}$ are two different sections providing $\mathrm{2}$-cocycles $\mathrm{(\Theta_1,\xi_1,\chi_1)}$ 
	and $\mathrm{(\Theta_2,\xi_2,\chi_2)}$ respectively. Define a linear map $\mathrm{\mathfrak{h}:A\rightarrow M}$ by $\mathrm{\mathfrak{h}(a)=s_1(a)-s_2(a),\quad \forall a\in A}$. 
	Then
	\begin{align*}
		\mathrm{\Theta_1(a,b)}&=\mathrm{\mu_\wedge(s_1(a),s_1(b)) -s_1(\mu(a,b))}\\
		&\mathrm{=\mu_\wedge(\mathfrak{h}(a)+s_2(a),\mathfrak{h}(a)+s_2(a) )-\mathfrak{h}(\mu(a,b))-s_2(\mu(a,b))}\\
		&\mathrm{=\mu_\wedge(s_2(a),s_2(b)) -\mathfrak{h}(\mu(a,b))}\\
		&\mathrm{=\Theta_2(a,b)+\delta^1_\mathrm{CE}(\mathfrak{h})(a,b)}
	\end{align*}
	And
	\begin{align*}
		\mathrm{\xi_1(a)}&\mathrm{=\hat{R}(s_1(a))-s_1(R(a))}\\
		&\mathrm{=\hat{R}(\mathfrak{h}(a)+s_2(a))-\mathfrak{h}((R(a)))-s_2(R(a))}\\
		&\mathrm{=\xi_2(a)+R_V(\mathfrak{h}(a))-\mathfrak{h}(R(a))}\\
		&\mathrm{=\xi_2(a)-\phi^1\mathfrak{h}(a)}.
	\end{align*}
	Also
	\begin{align*}
		\mathrm{\chi_1(a)}&=\mathrm{\hat{d}(s_1(a))-s_1(d(a))}\\
		&\mathrm{=\hat{d}(\mathfrak{h}(a)+s_2(a))-\mathfrak{h}((d(a)))-s_2(d(a))}\\
		&\mathrm{=\chi_2(a)+d_V(\mathfrak{h}(a))-\mathfrak{h}(d(a))}\\
		&\mathrm{=\xi_2(a)-\Delta^1\mathfrak{h}(a)}.
	\end{align*}
	Which means that
	\begin{equation*}
		\mathrm{(\Theta_1,\xi_1,\chi_1)=(\Theta_2,\xi_2,\chi_2)+\mathfrak{D}^1_\mathrm{mRBLD^\kappa}(\mathfrak{h})}.
	\end{equation*}
	So $\mathrm{(\Theta_1,\xi_1,\chi_1)}$ and $\mathrm{(\Theta_2,\xi_2,\chi_2)}$ are in the same cohomological class.\\
	Next, we show that equivalent abelian extensions gives the same element in $\mathrm{\mathcal{H}^2_\mathrm{mRBAD^\kappa}(A;M)}$. Let $\mathrm{(\hat{A}_1,\mu_{\wedge_1},\hat{R}_1,\hat{d}_1)}$ and $\mathrm{(\hat{A}_2,\mu_{\wedge_2},\hat{R}_2,\hat{d}_2)}$ 
	be two equivalent abelian extensions of a modified Rota-Baxter AssDer pair $\mathrm{(A,\mu,R,d)}$ by 
	$\mathrm{(M,\mu_\mathrm{M},R_M,d_M)}$ via the homomorphism $\mathrm{\gamma}$. Assume that $\mathrm{s_1}$ is a section of 
	$\mathrm{(\hat{A}_1,\mu_{\wedge_1},\hat{R}_1,\hat{d}_1)}$ and $\mathrm{(\Theta_1,\xi_1,\chi_1)}$ is the corresponding 
	$\mathrm{2}$-cocycle. Since $\mathrm{\gamma}$ being a homomorphism of modified Rota-Baxter AssDer pairs such that 
	$\mathrm{\gamma_{\vert M}=\mathrm{Id_M}}$, we have then
	\begin{align*}
		\mathrm{\xi_2(a)}&\mathrm{=\hat{R}_2(a)-s_2(R(a))}\\
		&\mathrm{=\hat{R}_2(\gamma(s_1(a)))-\gamma(s_1(R(a)))}\\
		&\mathrm{=\gamma(\hat{R}_1(s_1(a))-s_1(R(a)))}\\
		&\mathrm{=\xi_1(a)}
	\end{align*}
	and
	\begin{align*}
		\mathrm{\chi_2(a)}&\mathrm{=\hat{d}_2(a)-s_2(d(a))}\\
		&\mathrm{=\hat{d}_2(\gamma(s_1(a)))-\gamma(s_1(d(a)))}\\
		&\mathrm{=\gamma(\hat{d}_1(s_1(a))-s_1(d(a)))}\\
		&\mathrm{=\chi_1(a)}
	\end{align*}
	similarly we obtain $\mathrm{\Theta_2(a,b)=\Theta_1(a,b)}$. Thus, equivalent abelian extension gives the 
	same element in $\mathrm{\mathcal{H}^2_\mathrm{mRBAD^\kappa}(A;M)}$.\\
	On the other hand, given two $\mathrm{2}$-cocycles $\mathrm{(\Theta_1,\xi_1,\chi_1)}$ and 
	$\mathrm{(\Theta_2,\xi_2,\chi_2)}$, we have two abelian extensions 
	$\mathrm{(A\oplus M,\mu_{\Theta_1},R_{\xi_1},d_{\chi_1})}$ and 
	$\mathrm{(A\oplus M,\mu_{\Theta_2},R_{\xi_2},d_{\chi_2})}$ by theorem \eqref{theorem ext}. 
	Suppose that they belong to the same cohomological class in $\mathrm{\mathcal{H}^2_\mathrm{mRBAD^\kappa}(A;M)}$, 
	then the existence of a linear map $\mathrm{\mathfrak{h}:A\rightarrow M}$ such that
	\begin{equation*}
		\mathrm{(\Theta_1,\xi_1,\chi_1)=(\Theta_2,\xi_2,\chi_2)+\mathfrak{D}^1_\mathrm{mRBAD^\kappa}(\mathfrak{h})}.
	\end{equation*}
	Define $\mathrm{\gamma:A\oplus M\rightarrow A\oplus M}$ by $\mathrm{\gamma(a+m)=a+\mathfrak{h}(a)+m}$, for all $\mathrm{a\in A}$ and $\mathrm{m\in M}$.
	\begin{align*}
		\mathrm{\gamma(\mu_{\Theta_1}(a+m,b+n))- \mu_{\Theta_2}(\gamma(a+m),\gamma(b+n)) }
		&\mathrm{=\gamma(\mu(a,b)+\Theta_1(a,b))- \mu_{\Theta_2}(a+\mathfrak{h}(a)+m,b+\mathfrak{h}(b)+n) }\\
		&\mathrm{=\gamma(\mu(a,b)+\Theta_1(a,b))-\mu(a,b)-\Theta_2(a,b)}\\
		&\mathrm{=\mu(a,b)+\mathfrak{h}(\mu(a,b))+\Theta_1(a,b)-\mu(a,b)-\Theta_2(a,b)}\\
		&\mathrm{=\Theta_1(a,b)-\Theta_2(a,b)-\delta^1_\mathrm{CE}(\mathfrak{h})(a,b)}\\
		&\mathrm{=0}
	\end{align*}
	similarly we have $\mathrm{\gamma\circ R_{\xi_1}=R_{\xi_2}\circ\gamma}$ and 
	$\mathrm{\gamma\circ d_{\chi_1}=d_{\chi_2}\circ\gamma}$. Thus $\mathrm{\gamma}$ is a 
	homomorphism of these two abelian extensions, this complete the proof.
\end{proof}

\noindent {\bf Acknowledgment:}
The authors would like to thank the referee for valuable comments and suggestions on this article. Ripan Saha is supported by the Science and Engineering Research Board
(SERB), Department of Science and Technology (DST), Govt. of India (Grant Number- CRG/2022/005332).

%%%%%%%%%%%%%%%%%%%%%%%%%%%%%%%

\end{document}